\renewcommand*{\backrefalt}[4]{%
\ifcase #1 \footnotesize{(Not cited.)}%
\or        \footnotesize{(Cited on p.~#2)}%
\else      \footnotesize{(Cited on pp.~#2)}%
\fi}
\def\MODE{3}
	\def\qed{\rule[0pt]{5pt}{5pt}\par\medskip}
	\newcommand{\qedhere}{\hfill ~\qed}
	\newenvironment{proof}{{\noindent\bf Proof.}}{\qedhere}
\newtheorem{thm}{Theorem}
\newtheorem{lem}[thm]{Lemma}
\newtheorem{rem}[thm]{Remark}
\newtheorem{defn}[thm]{Definition}
\newtheorem{assumption}[thm]{Assumption}
\crefname{thm}{Theorem}{Theorems}
\crefname{problem}{Problem}{Problems}
\crefname{lem}{Lemma}{Lemmas}
\crefname{prop}{Proposition}{Propositions}
\crefname{rem}{Remark}{Remarks}
\crefname{defn}{Definition}{Definitions}
\crefname{cor}{Corollary}{Corollaries}
\crefname{assumption}{Assumption}{Assumptions}
	\def\thm@space@setup{%
	\thm@preskip=\parskip \thm@postskip=0pt
	}
\newcommand{\1}{\mathbf{1}}
\newcommand{\0}{\mathbf{0}} 
\newcommand{\Pp}{\mathcal{P}}
\newcommand{\Pu}{\mathcal{P}_{11}}
\newcommand{\Pv}{\mathcal{P}_{12}}
\newcommand{\K}{\mathcal{K}}
\newcommand{\Pw}{\mathcal{P}_{21}}	
\newcommand{\Pg}{\mathcal{P}_{22}}
\newcommand{\Q}{\mathcal{Q}}
\newcommand{\Ss}{\mathcal{S}}
\newcommand{\Stau}{\mathcal{S}_{\tau}}
\newcommand{\Tx}{\mathcal{T}}
\newcommand{\F}{\mathcal{F}}
\newcommand{\G}{\mathcal{G}}
\newcommand{\proper}{\mathcal{L}_\textup{prop}}
\newcommand{\Dcen}{J_{\textup{cen}}}
\newcommand{\Ddecen}{J_{\textup{dec}}}
\newcommand{\Ddecdel}{J_{\textup{dec},\textup{del}}}
\newcommand{\Dcendel}{J_{\textup{del}}}
\newcommand{\DQ}{J_Q}
\newcommand{\Qopt}{\Q_{\textup{opt}}}
\newcommand{\sbmat}[1]{\left(\begin{smallmatrix}#1\end{smallmatrix}\right)}
\newcommand{\squeezemat}[1]{\addtolength{\arraycolsep}{-#1}}
\newcommand{\blue}[1]{\textcolor{blue}{#1}}
\renewcommand{\blue}[1]{#1}
	\def\BibTeX{{\rm B\kern-.05em{\sc i\kern-.025em b}\kern-.08em
			T\kern-.1667em\lower.7ex\hbox{E}\kern-.125emX}}
\begin{document}

\if\MODE3 
	\title{Optimal Control of Multi-Agent Systems\\ with Processing Delays}
	\date{}
	\author{
	Mruganka Kashyap\thanks{M.~Kashyap is with the Department of Electrical and Computer Engineering at\\ Northeastern University, Boston, MA 02115, USA. (e-mail: kashyap.mru@northeastern.edu).}
	\and 
	Laurent Lessard\thanks{L.~Lessard is with the Department of Mechanical and Industrial Engineering at\\
	Northeastern University, Boston, MA 02115, USA. (e-mail: l.lessard@northeastern.edu).}}
\else
	\title{Optimal Control of Multi-Agent Systems with Processing Delays}
	\author{Mruganka Kashyap, \IEEEmembership{Member, IEEE}, and Laurent Lessard, \IEEEmembership{Senior Member, IEEE}
	\thanks{This paragraph of the first footnote will contain the date on 
			which you submitted your paper for review. It will also contain support 
			information, including sponsor and financial support acknowledgment. \\	
			This material is based upon work supported by the National Science Foundation under Grant No.~2136317.}
	\thanks{M.~Kashyap completed this work when he was with the Department of Electrical and Computer Engineering at Northeastern University, Boston, MA 02115, USA. He is now at Indigo Technologies, Inc.\\(e-mail: mruganka.kashyap@indigotech.com).}
	\thanks{L.~Lessard is with the Department of Mechanical and Industrial Engineering at Northeastern University, Boston, MA 02115, USA.\\(e-mail: l.lessard@northeastern.edu).}}
\fi
\maketitle


\begin{abstract}
    In this article, we consider a cooperative control problem involving \blue{a heterogeneous network of dynamically decoupled continuous-time linear plants. The (output-feedback) controllers for each plant may communicate with each other according to a fixed and known transitively closed directed graph. Each transmission incurs a fixed and known time delay.} We provide an explicit closed-form expression for the optimal decentralized controller and its associated cost under these communication constraints and standard linear quadratic Gaussian (LQG) assumptions for the plants and cost function. We find the exact solution without discretizing or otherwise approximating the delays. We also present an implementation of each sub-controller that is efficiently computable, and is composed of standard finite-dimensional linear time-invariant (LTI) and finite impulse response (FIR) components, and has an intuitive observer-regulator architecture reminiscent of the classical separation principle.
\end{abstract}


\section{Introduction}\label{sec:intro}

In multi-agent systems such as swarms of unmanned aerial vehicles, it may be desirable for agents to cooperate in a decentralized fashion without receiving instructions from a central coordinating entity. Each agent takes local measurements, performs computations, and may communicate its measurements with a given subset of the other agents, with a time delay. In this \blue{article}, we investigate the problem of optimal control under the aforementioned communication constraints.

We model each agent as a continuous-time \blue{linear time-invariant (LTI)} system. \blue{We make no assumption of homogeneity across agents; each agent may have different dynamics.} We assume the aggregate dynamics of all agents are described by the state-space equations
\begin{equation}\label{aggregateddynamics}
\bmat{\dot x \\ z \\ y} = \bmat{
    A & B_1 & B_2 \\ 
    C_1 & 0 & D_{12} \\
    C_2 & D_{21} & 0}
\bmat{x \\ w \\ u},
\end{equation}
where $x$ is the global state, $z$ is the regulated output, $y$ is the measured output, $w$ is the exogenous disturbance, and $u$ is the controlled input. The decoupled nature of the agents imposes a sparsity structure on the plant. Namely, if we partition $x$, $y$, $w$, $u$ each into $N$ pieces corresponding to the $N$ agents, the conformally partitioned state space matrices $A$, $B_1$, $B_2$, $C_2$, $D_{21}$ are block-diagonal. The regulated output $z$, however, couples all agents' states and inputs, so in general $C_1$ and $D_{12}$ will be dense.
The matrix transfer function $(w,u)\to (z,y)$ is a standard four-block plant that takes the form\footnote{In a slight abuse of notation, the vectors $z$, $y$, $w$, and $u$ now refer to the Laplace transforms of the corresponding time-domain signals in \eqref{aggregateddynamics}.}
\begin{equation}\label{fourblockplant}
	\bmat{z \\ y} = \bmat{\Pp_{11}(s) & \Pp_{12}(s) \\ \Pp_{21}(s) & \Pp_{22}(s)}
	\bmat{ w\\ u},
\end{equation}
where $\Pp_{21}$ and $\Pp_{22}$ are block-diagonal.

\blue{We assume information sharing is mediated by a fixed and known directed graph. Specifically, if there is a (possibly multi-hop) directed path from Agent $i$ to Agent $j$, then Agent $j$ can observe the local measurements of Agent $i$ with a delay $\tau$. We further assume there are no self-delays, so agents can observe their local measurements instantaneously.}

\blue{In practice, our setting corresponds to a network where the chief source of latency is due to processing and transmission delays~\cite[\S1.4]{kurose2021networking} (the encoding, decoding, and transmission of information). Therefore, we neglect propagation delays (proportional to distance traveled) and queuing delays (related to network traffic and hops required to reach the destination).}

We assume $\tau$ is fixed and known \blue{and homogeneous across all communication paths,} as it is determined by the physical capabilities (e.g., underlying hardware and software) of the individual agents rather than external factors. Thus, Agent~$i$'s feedback policy (in the Laplace domain) is of the form\footnote{There is no loss of generality in assuming a linear control policy; see \cref{sec:lit_review} for details.}
\begin{equation}\label{Kform}
u_i = \K_{ii}(s)y_i + \sum_{j \to i} e^{-s\tau}\K_{ij}(s) y_j,
\end{equation}
where the sum is over all agents $j$ for which there is a directed path from $j$ to $i$ in the underlying communication graph.

Given the four-block plant \eqref{fourblockplant}, the directed communication graph, and the processing delay $\tau$, we study the problem of finding a structured controller that is internally stabilizing and minimizes the $\Htwo$ norm of the closed-loop map $w\to z$.

In spite of the non-classical information structure present in this problem, it is known that there is a convex Youla-like parameterization of the set of stabilizing structured controllers, and the associated $\Htwo$ synthesis problem is a convex, albeit infinite-dimensional, optimization problem.

\textbf{Main contribution.} We provide a complete solution to this structured cooperative control problem that is computationally tractable and intuitively understandable. Specifically, the optimal controller can be implemented with a finite memory and transmission bandwidth that does not grow over time. Moreover, the controller implementations at the level of individual agents have \blue{separation structures between the observer and regulator} reminiscent of classical $\Htwo$ synthesis theory.\looseness=-1

In the remainder of this section, we give context to this problem and relate it to works in optimal control, delayed control, and decentralized control.
In \cref{sec:notation}, we cover some mathematical preliminaries and give a formal statement of the problem.
In \cref{sec:result_consolidated}, we give a convex parameterization of all structured suboptimal controllers, and present the $\Htwo$-optimal controller for the non-delayed ($\tau=0$) and delayed ($\tau > 0$) cases. In \cref{sec:result_3}, we describe the optimal controller architecture at the level of the individual agents, and give intuitive interpretations of the controller architecture.
In \cref{sec:result_4}, we present case studies that highlight the trade-offs between processing delay, connectivity of the agents, and optimal control cost. Finally, we conclude in \cref{sec:conclufuture} and discuss future directions.

\subsection{Literature review}\label{sec:lit_review}

If we remove the structural constraint~\eqref{Kform} and allow each $u_i$ to have an arbitrary causal dependence on all $y_j$ with no delays, the optimal controller is linear and admits an observer--\blue{regulator} separation structure \cite{wonham1968separation}. This is the classical $\Htwo$ (LQG) synthesis problem, solved for example in \cite{ZDG}.

The presence of structural constraints generally leads to an intractable problem~\cite{blondel}. For example, linear compensators can be strictly suboptimal, even under LQG assumptions~\cite{witsenhausen1968counterexample}. Moreover, finding the best \emph{linear} compensator also leads to a non-convex infinite-dimensional optimization problem.\looseness=-1

However, not all structural constraints lead to intractable synthesis problems. For LQG problems with \blue{\emph{partially nested} information}, there is a linear optimal controller~\cite{ho1972team}. If the information constraint is \blue{\emph{quadratically invariant}} with respect to the plant, the problem of finding the optimal LTI controller can be convexified~\cite{rotkowitz2005characterization,rotkowitz2010convexity}. The problem considered in this \blue{article} is both \blue{partially nested and quadratically invariant}, so there is no loss in assuming a linear policy as we do in~\eqref{Kform}.

Once the problem is convexified, the optimal controller can be computed exactly using approaches like vectorization \cite{rotkowitz_vectorization,vamsi_elia}, or approximated to arbitrary accuracy using Galerkin-style numerical approaches~\cite{voulgaris_stabilization,scherer02}.
However, these approaches lead to realizations of the solution that are neither minimal nor easily interpreted. For example, a numerical solution will not reveal a separation structure in the optimal controller, nor will it provide an interpretation of controller states or the signals communicated between agents' controllers.
Indeed, the optimal controller may have a rich structure, reminiscent of the centralized separation principle. Such \emph{explicit solutions} were found for broadcast~\cite{lessard2012decentralized}, triangular~\cite{tanaka2014optimal, lessard2015optimal}, and dynamically decoupled~\cite{kim2012optimal,kim2015explicit,kashyap2019explicit} cases.

The previously mentioned works do not consider time delays. In the presence of delays, we distinguish \blue{between discrete and continuous time. In discrete time, the delay transfer function $z^{-1}$ is rational}. Therefore, the problem may be reduced to the non-delayed case by absorbing each delay into the plant~\cite{spdel}. However, this reduction is not possible in continuous time because the \blue{continuous-time delay transfer function $e^{-s\tau}$ is irrational}. A Pad\'{e} approximation may be used for the delays~\cite{yan1996teleoperation}, but this leads to approximation error and a larger state dimension.

Although the inclusion of continuous-time delays renders the state space representation infinite-dimensional, the optimal controller may still have a rich structure. For systems with a \emph{dead-time delay} (the entire control loop is subject to the same delay), a loop-shifting approach using \blue{finite impulse response (FIR)} blocks can transform the problem into an equivalent delay-free LQG problem with a finite-dimensional LTI plant~\cite{mirkin2003every,mirkin2003extraction}. A similar idea was used in the discrete-time case to decompose the structure into dead-time and FIR components, which can be optimized separately~\cite{lampHtwoDelay}.

The loop-shifting technique can be extended to the \emph{adobe delay} case, where the feedback path contains both a delayed and a non-delayed path \cite{mirkin2009loop,mirkin2011dead,mirkin2012h2}.
The loop-shifting technique was also extended to specific cases like bilateral teleoperation problems that involve two stable plants whose controllers communicate across a delayed channel~\cite{kristalny2012decentralized,cho2012h2}, and haptic interfaces that have two-way communication with a shared virtual environment~\cite{kristalny2013decentralized}. \blue{Another example is the case of homogeneous agents coupled via a diagonal-plus-low-rank cost~\cite{madjidian2016h2}. All three of} these examples are special cases of the information structure~\eqref{Kform}. 

In the present work, we solve a general structured $\Htwo$ synthesis problem with $N$ agents that communicate using a structure of the form~\eqref{Kform}. We present explicit solutions that show an intuitive \blue{observer-regulator} structure at the level of each individual sub-controller. 
Preliminary versions of these results that only considered stable or non-delayed plants were reported in~\cite{kashyap2019explicit,kashyap2020agent}. \blue{In this article, we consider the general case of an unstable plant, we find an agent-level parameterization of all stabilizing controllers, and we obtain explicit closed-form expressions for the optimal cost.}

\section{Preliminaries}\label{sec:notation}

\paragraph{Transfer matrices.}

\blue{Let $\C_\alpha \defeq \set{s \in \C}{\Re(s) > \alpha}$ and $\bar{\C}_\alpha \defeq \set{s \in \C}{\Re(s) \geq \alpha}$.
A transfer matrix $\G(s)$ is said to be \emph{proper} if there exists an $\alpha > 0$ such that $\sup_{s \in \C_\alpha} \norm{\G(s)} <\infty$. We call this set $\proper$. Similarly, a transfer matrix $\G(s)$ is said to be \emph{strictly proper} if this supremum vanishes as $\alpha \rightarrow \infty$.
The Hilbert space $\Ltwo$ consists of analytic functions $\F:i\R\to \C^{m\times n}$ equipped with the inner product 
$\langle\F,\G\rangle\defeq \frac{1}{2 \pi}\int_\R \trace\bigl(\F(i\omega)^*\G(i\omega)\bigr) \,\mathrm{d}\omega
$, where the inner product induced norm $\norm{\F}_2\defeq {\langle\F,\F\rangle}^{1/2}$ is bounded. A function $\F:\bar{\C}_{0}\to \C^{m\times n}$ is in $\Htwo$ if $\F(s)$ is analytic in $\C_0$, $\textup{lim}_{\sigma\rightarrow 0^{+}} \F\left(\sigma+i\omega\right)=\F\left(i\omega\right)$ for almost every $\omega\in\R$, and $\sup_{\sigma\geq 0} \frac{1}{2 \pi} \int_{-\infty}^{\infty} \trace\bigl(\F(\sigma+i\omega)^*\F(\sigma+i\omega)\bigr) \,\mathrm{d}\omega<\infty$. This supremum is always achieved at $\sigma=0$ when $\F\in\Htwo$. The set $\Htwo^\perp$ is the orthogonal complement of $\Htwo$ in $\Ltwo$.
The set $\RHtwo$ refers to the subspace of strictly proper rational transfer functions with no poles in $\bar{\C}_0$. Similarly, the set $\RHtwo^\perp$ refers to the subspace of strictly proper rational transfer functions with all poles in $\C_0$. The set $\Linf$ consists of matrix-valued functions $\F:i\R\to \C^{m\times n}$ for which $\sup_{\omega\in\R}\norm{\F(i\omega)} < \infty$. $\Hinf$ and $\RHinf$ are defined analogously to $\Htwo$ and $\RHtwo$.}

The state-space notation for transfer functions is
\begin{align}\label{ssnotation}
	\G(s) = \left[\begin{array}{c|c}%
		A&B\\ \hlinet C&D\end{array}\right] \defeq D+C(sI-A)^{-1}B.
\end{align}
A square matrix $A$ is \emph{Hurwitz} \blue{if none of its eigenvalues belong to $\C_0$.} If $A$ is Hurwitz in \eqref{ssnotation}, then \blue{$\G\in\RHinf$}. If $A$ is Hurwitz and $D=0$, then \blue{$\G\in\RHtwo$}.
\blue{The \emph{conjugate} of $\G$ is
\begin{align*}
	\G^\sim(s) = \G^\tp(-s)=\left[\begin{array}{c|c}%
		-A^\tp&C^\tp\\\hlinet {-B^\tp}&D^\tp\end{array}\right].
\end{align*}} 
The dynamics \eqref{aggregateddynamics} and four-block plant $\Pp$ from \eqref{fourblockplant} satisfy
\begin{equation}\label{eq:fourblock}
\Pp(s) \defeq
\bmat{ \Pu(s) & \Pv(s) \\ \Pw(s) & \Pg(s) } =
\left[\begin{array}{c|cc}
	A & B_1 & B_2\\ \hlinet
	C_1 & 0 & D_{12} \\
	C_2 & D_{21} & 0 \end{array}\right].
\end{equation}
If we use the feedback policy $u = \K y$, then we can eliminate $u$ and $y$ from \eqref{fourblockplant} to obtain the closed-loop map $w\to z$, which is given by the lower linear fractional transformation (LFT) defined as $\F_l(\Pp,\K)\defeq \Pu+\Pv\K(I-\Pg\K)^{-1}\Pw$. LFTs can be inverted: if $\K=\F_l(\J,\Q)$ and $\J$ has a proper inverse, then $\Q=\F_u(\J^{-1},\K)$, where $\F_u$ is the upper linear fractional transformation: $\F_u(\Pp,\K)\defeq \Pg+\Pw\K(I-\Pu\K)^{-1}\Pv$.

\paragraph{Block indexing.}
Ordered lists of indices are denoted using $\{\ldots\}$.
The total number of agents is $N$ and $[N] \defeq \{1,\dots,N\}$. The $i^\text{th}$ subsystem has state dimension $n_i$, input dimension $m_i$, and measurement dimension $p_i$. The global state dimension is $n \defeq n_1+\cdots+n_N$ and similarly for $m$ and $p$. The matrix $I_k$ is the identity of size $k$ and $\blkdiag(\{X_i\})$ is the block-diagonal matrix formed by the blocks $\{X_1,\dots,X_n\}$.
The zeros used throughout are matrix or vector zeros and their sizes are dependent on the context.

We write $\underline{i}$ to denote the \textit{descendants} of node $i$, i.e., the set of nodes $j$ such that there is a directed path from $i$ to $j$ for all $i\in [N]$. By convention, we list $i$ first, and then the remaining indices \blue{in increasing order}. The directed path represents the direction of information transfer between the agents. Similarly, $\bar{i}$ denotes the \textit{ancestors} of node $i$ (again listing $i$ first). We also use $\bar{\bar{i}}$ and $\underline{\underline{i}}$ to denote the \textit{strict} ancestors and descendants, respectively, which excludes $i$.
For example, in \cref{fig:cartoon}, we have \blue{$\underline{2} = \{2,5\}$ and $\bar{\bar{3}} = \{1,4\}$.}

\begin{figure}[ht]
	\centering
	\includegraphics{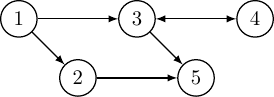}
	\caption{Directed graph representing \blue{five} interconnected systems.}
	\label{fig:cartoon}
\end{figure}

We also use this notation to index matrices. For example, if $X$ is a \blue{$5\times 5$} block matrix, then
\blue{$X_{1\underline{2}} = \bmat{X_{12} & X_{15}}$.} 
We will use specific partitions of the identity matrix throughout: $I_{n} \defeq \blkdiag(\{I_{n_i}\})$, and for each agent $i \in [N]$, we define $E_{n_i} \defeq (I_{n})_{:i}$ (the $i^\textup{th}$ block column of $I_{n}$). We have $n_{\underline{i}}=\sum_{k\in \underline{i}}n_k$ and $n_{\bar{i}}=\sum_{k\in \bar{i}}n_k$, akin to the descendant and ancestor definitions above. The dimensions of $E_{{n_{\bar{i}}}}$ and $E_{{n_{\underline{i}}}}$ are determined by the context of use. \blue{We also use the notations $X_{:i}$ and $X_{\bar{i}:}$ to indicate the $i^\textup{th}$ block column and $\bar{i}^\textup{th}$ block rows respectively for a matrix $X$.} Similar notations $1_{n}$ is the $n\times 1$ matrix of $1$'s. Further  notations are defined at their points of first use.

\subsection{Delay} \label{sec:delay}

We follow the notation conventions set in~\cite{mirkin2012h2}. 
The \textit{adobe delay} matrix \if\MODE3\else{defined as}\fi \blue{\;$\Lambda_m^i \defeq \blkdiag(I_{m_i},e^{-s\tau}I_{m_{\underline{\underline{i}}}})$} leaves block $i$ unchanged and imposes a delay of $\tau$ on all strict descendants of $i$.
We define $\Gamma : (\Pp,\Lambda_m^i) \mapsto (\tilde{\Pp},\Pi_u,\Pi_b)$ that maps the plant $\Pp$ in~\eqref{eq:fourblock} and adobe delay matrix $\Lambda_m^i$ to a modified plant $\tilde{\Pp}$ and \blue{FIR} systems $\Pi_u$ and $\Pi_b$. This loop-shifting transformation reported in \cite{mirkin2009loop,mirkin2012h2,mirkin2011dead} shown in \cref{fig:Mirkin_Gamma} transforms a loop with adobe input delay into a modified system involving a rational plant $\tilde\Pp$.
See \cref{sec:appendix_gamma} for details on the definition of $\Gamma$.

In this decomposition, $\langle\Delta,\Psi\rangle = 0$ and $\Psi$ is inner \blue{(if $\Psi \in \RHinf$ and $\Psi^{\sim}\Psi = I$)}, so the closed-loop map satisfies $\norm{ \F_l( \Pp,\Lambda_m^i\K ) }^2 = \norm{\Delta}^2 + \norm{\F_l( \tilde\Pp,\tilde\K)}^2$.
Thus, we can find the $\Htwo$-optimal $\K$ by first solving a standard $\Htwo$ problem with $\tilde\Pp$ to obtain $\tilde\K$, and then transforming back using $\K = \Pi_u \tilde\K (I - \Pi_b\tilde\K)^{-1}$. This transformation, illustrated in the bottom left panel of \cref{fig:Mirkin_Gamma}, has the form of a \emph{modified Smith predictor}, where the FIR blocks $\Pi_u$ and $\Pi_b$ compensate for the effect of the adobe delay in the original loop. See \cite[\S III.C]{mirkin2011dead} for further detail.

\begin{figure}[ht]
	\centering
	\includegraphics{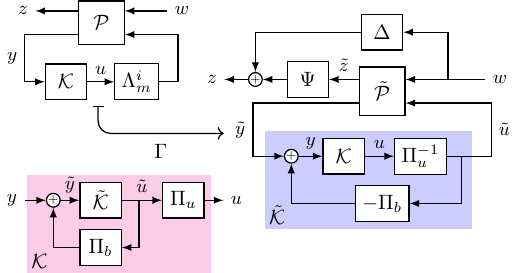}
	\caption{The loop-shifting approach \cite{mirkin2009loop,mirkin2012h2,mirkin2011dead} transforms a loop with adobe input delay (top left) into a modified system involving a rational plant $\tilde\Pp$ and FIR blocks $\Pi_u$ and $\Pi_b$ (right). This transformation $\Gamma$ is defined in \cref{sec:appendix_gamma}. We can recover $\K$ from $\tilde \K$ via the inverse transformation (bottom left).}
	\label{fig:Mirkin_Gamma}
\end{figure}

\subsection{Problem statement}\label{sec:problem_statement}
Consider a four-block plant~\eqref{eq:fourblock} representing the aggregated dynamics of $N$ agents as described in \cref{sec:intro}, which we label using indices $i \in [N]$. Suppose $x \in \R^n$, $u \in \R^m$, and $y\in \R^p$, partitioned conformally with the $N$ subsystems as $n = n_1 + \cdots + n_N$ and similarly for $m$ and $p$.

Consider a directed graph on the nodes $[N]$, and let $\Stau$ be the set of compensators of the form~\eqref{Kform}. For example, for the directed graph of \cref{fig:cartoon}, \blue{every controller takes the form}
\blue{\[
\bmat{\K_{11} & 0 & 0 & 0 & 0 \\
	e^{-s\tau}\K_{21} & \K_{22} & 0 & 0 & 0 \\
	e^{-s\tau}\K_{31} & 0 & \K_{33} & e^{-s\tau}\K_{34} & 0 \\
	e^{-s\tau}\K_{41} & 0 & e^{-s\tau}\K_{43} & \K_{44} & 0 \\
	e^{-s\tau}\K_{51} & e^{-s\tau}\K_{52} & e^{-s\tau}\K_{53} & e^{-s\tau}\K_{54} & \K_{55}}
\]}%
\blue{where $\K_{ij} \in \proper$.}
So each agent may use its local measurements with no delay, and measurements from its ancestors with a delay of $\tau$. An output-feedback policy $u = \K y$ (internally) \emph{stabilizes} $\Pp$ if
\[
\bmat{I & -\Pp_{22} \\ -\K & I}^{-1} \in \Hinf.
\]
For further background on stabilization, we refer the reader to \cite{ZDG,DP}. We consider the problem of finding a structured controller that is stabilizing and minimizes the $\Htwo$ norm of the closed-loop map. Specifically, we seek to
\begin{equation}\label{opt}
\begin{aligned}
\underset{\K}{\minimize}
\qquad & \normm{\F_l(\Pp,\K)}_2^2 \\
\subject \qquad & \K \in \Stau
\text{ and $\K$ stabilizes $\Pp$.}
\end{aligned}
\end{equation}

\blue{In the remainder of this section, we list our technical assumptions and define control and estimation gains that will appear in our solution. The assumptions we make ensure that relevant estimation and control subproblems are non-degenerate. We make no assumptions regarding the open-loop stability of $\Pp$. }

\blue{\begin{assumption}[\textbf{System assumptions}]
	\label{Ass:System}
	For the $N$ interacting agents, the Riccati assumptions defined in \cref{defn:Riccati} hold for $(A,B_2,C_1,D_{12})$ and for $(A_{ii}^\tp ,C_{2_{ii}}^\tp ,B_{1_{ii}}^\tp ,D_{{21}_{ii}}^\tp )$ for all $i\in[N]$.
\end{assumption}}

\blue{\begin{defn}[\textbf{Riccati assumptions}]\label{defn:Riccati}
Matrices $(A,B,C,D)$ satisfy the \emph{Riccati assumptions}~\cite{kim2015explicit,mirkin2012h2} if:
\begin{enumerate}[R1.] 
	\item $D^\tp D\succ 0$. \label{defn:Riccati_1}
	\item $(A,B)$ is stabilizable.\label{defn:Riccati_2}
	\item $\begin{bmatrix}A-j\omega I&B\\C&D\end{bmatrix}$ has full column rank for all $\omega \in \R$.\label{defn:Riccati_3}
\end{enumerate}
If the Riccati assumptions hold, there is a unique stabilizing solution for the corresponding algebraic Riccati equation. We write this as $(X,F)=\ric(A,B,C,D)$. Thus, $X \succ 0$ satisfies
\if\MODE1
\begin{multline*}
	A^\tp X+XA+C^\tp C \\-(XB+C^\tp D)(D^\tp D)^{-1}(B^\tp X+D^\tp C) = 0,
\end{multline*}
\else
\begin{equation*}
	A^\tp X+XA+C^\tp C-(XB+C^\tp D)(D^\tp D)^{-1}(B^\tp X+D^\tp C) = 0,
\end{equation*}
\fi
with $A+BF$ Hurwitz and $F\defeq -(D^\tp D)^{-1}(B^\tp X + D^\tp C)$.
\end{defn}}
\subsubsection{Riccati equations}\label{App:ARE}
The algebraic Riccati equations (AREs) corresponding to the centralized linear quadratic regulator (LQR) and Kalman filtering are
\begin{subequations}\label{eq:ARE_cen}
	\begin{align}
		(X_{\textup{cen}},F_{\textup{cen}}) &\defeq
		\ric(A,B_{2},C_{1},D_{12}),\label{eq:ARE_cen:1} \\
		(Y_{\textup{cen}},L_{\textup{cen}}^\tp) &\defeq
		\ric(A^\tp,C_{2}^\tp,B_{1}^\tp,D_{21}^\tp).\label{eq:ARE_cen:2}
	\end{align}
\end{subequations}
Consider controlling the descendants of Agent $i$ using only measurements $y_i$. The associated four-block plant is
\begin{align}
	{\Pp}_i \defeq
	\left[\begin{array}{c c}
		{\Pp}_{11_{:i}} & {\Pp}_{12_{:\underline{i}}}\\
		{\Pp}_{21_{ii}} & {\Pp}_{22_{i\underline{i}}}\end{array}\right] \defeq
	\left[\begin{array}{c|cc}
		A_{\underline{ii}} & B_{1_{\underline{i}i}} & B_{2_{\underline{ii}}} \\[2pt] \hlinet
		{C}_{1_{:\underline{i}}} & 0 & D_{12_{:\underline{i}}} \\
		C_{2_{i\underline{i}}} & D_{21_{ii}} & 0\end{array}\right], \label{Pi}
\end{align} and we define the corresponding ARE solutions as
\begin{subequations}\label{eq:ARE_decen_nodel}
	\begin{align}
		(X^i,F^i) &\defeq
		\ric(A_{\underline{ii}},B_{2_{\underline{ii}}},C_{1_{:\underline{i}}},D_{12_{:\underline{i}}}), \label{eq:ARE_decen_nodel:1}\\
		(Y^i,{L^i}^\tp) &\defeq
		\ric(A_{ii}^\tp,C_{2_{ii}}^\tp,B_{1_{ii}}^\tp,D_{{21}_{ii}}^\tp).\label{eq:ARE_decen_nodel:2}
	\end{align}
\end{subequations}
Note that the block-diagonal structure of the estimation subproblems implies $Y_\textup{cen} = \blkdiag(\{Y^i\})$ and $L_{\textup{cen}} = \blkdiag(\{L^i\})$.
\blue{Existence of the matrices defined in \eqref{eq:ARE_cen} and \eqref{eq:ARE_decen_nodel} follows from \cref{Ass:System} and the fact that $A$, $B_1$, $B_2$, $C_2$, and $D_{21}$ are block-diagonal.}
If we apply the loop-shifting transformation $\Gamma$ described in \cref{sec:delay} and \cref{fig:Mirkin_Gamma}, we obtain the modified plant
\begin{align*}
	\tilde{\Pp}_i \defeq
	\left[\begin{array}{c c}
		\tilde{\Pp}_{11_{:i}} & \tilde{\Pp}_{12_{:\underline{i}}}\\
		{\Pp}_{21_{ii}} & \tilde{\Pp}_{22_{i\underline{i}}}\end{array}\right] \defeq
	\left[\begin{array}{c|cc}
		A_{\underline{ii}} & B_{1_{\underline{i}i}} & \tilde B_{2_{\underline{ii}}} \\[2pt] \hlinet
		\tilde {C}_{1_{:\underline{i}}} & 0 & D_{12_{:\underline{i}}} \\
		C_{2_{i\underline{i}}} & D_{21_{ii}} & 0\end{array}\right].
\end{align*} This modified plant has the same estimation ARE as in \eqref{eq:ARE_decen_nodel:2}, but a new control ARE, which we denote
	\begin{align}
		(\tilde X^i,\tilde F^i) &\defeq
		\ric(A_{\underline{ii}},\tilde B_{2_{\underline{ii}}},\tilde C_{1_{:\underline{i}}},D_{12_{:\underline{i}}}),\label{eq:ARE_decen_del:1}
	\end{align}
\blue{Existence of the matrices defined in \eqref{eq:ARE_decen_del:1} also follows from \cref{Ass:System} \cite[Lem.~4 and Rem.~1]{mirkin2012h2}.}

\section{Optimal Controller}\label{sec:result_consolidated}

We now present our solution to the structured optimal control problem described in \cref{sec:problem_statement}. We begin with a convex parameterization of all structured stabilizing controllers.

\subsection{Parameterization of stabilizing controllers}\label{Sec:stabilization}

This parameterization is similar to the familiar state-space parameterization of all stabilizing controllers \cite{ZDG,DP}, but with an additional constraint on the parameter $\Q$ to enforce the required controller structure.

\begin{lem}\label{lem:stabilizing_controllers}
\blue{Consider the structured optimal control problem described in \cref{sec:problem_statement}}with $\Pp$ given by \eqref{eq:fourblock} and suppose \cref{Ass:System} holds.
Pick $F_d$ and $L_d$ block-diagonal such that $A+B_2 F_d$ and $A+L_d C_2$ are Hurwitz.
The following are equivalent:

\begin{enumerate}[(i)]
\item $\K \in \Stau$ and $\K$ stabilizes $\Pp$.
\item $\K = \F_l(\J,\Q)$ for some $\Q \in \Hinf \cap \Stau$, where
	\begin{align}\label{nominal_Jd}
	 \J \defeq \left[\begin{array}{c|cc}
		A+B_2F_d+L_dC_2 & -L_d & B_{2} \\ \hlinet
		F_d&0&I \\
		-C_{2} & I & 0\end{array}\right].
    \end{align}
\end{enumerate}
\end{lem}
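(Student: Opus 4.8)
The plan is to reduce the lemma to two independent facts: the classical stabilizing-controller parameterization (which handles the internal-stability content), and a purely structural claim that the Youla-type map $\Q\mapsto\F_l(\J,\Q)$ and its inverse $\K\mapsto\F_u(\J^{-1},\K)$ both respect $\Stau$. First I would invoke the standard state-space result \cite{ZDG,DP}: since $F_d$ and $L_d$ are chosen so that $A+B_2F_d$ and $A+L_dC_2$ are Hurwitz, a controller $\K$ stabilizes $\Pp$ if and only if $\K=\F_l(\J,\Q)$ for some $\Q\in\Hinf$, with $\J$ as in \eqref{nominal_Jd}. This already settles the stabilization equivalence, so the remaining content of the lemma is exactly: for such a stabilizing $\K$ and its parameter $\Q$, one has $\K\in\Stau$ if and only if $\Q\in\Stau$.

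The key is that $\Stau$ is a subalgebra of proper transfer matrices that is closed under the inversions appearing in the two linear fractional transformations. Two facts drive this. The sparsity pattern of $\Stau$ — a nonzero $(i,j)$ block is allowed exactly when there is a directed path from $j$ to $i$ — is transitively closed, hence preserved under matrix products (this is the quadratic invariance of the structure). The delay pattern is preserved because $\proper$ is closed under products and contains $e^{-s\tau}$ (indeed $\sup_{\C_\alpha}\abs{e^{-s\tau}}=e^{-\alpha\tau}<\infty$), so a product of two off-diagonal blocks, each carrying a factor $e^{-s\tau}$, yields $e^{-2s\tau}$ times a proper term, which we rewrite as $e^{-s\tau}$ times a proper term and thus keep in $\Stau$. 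Together these give: if $M,N\in\Stau$ then $MN\in\Stau$; and if $M\in\Stau$ is strictly proper with $I-M$ having a proper inverse, then $(I-M)^{-1}\in\Stau$. Since block-diagonal transfer matrices form a subset of $\Stau$, this machinery applies to all the blocks of $\J$.

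Next I would compute the blocks of $\J$ and $\J^{-1}$ explicitly. Because $A$, $B_2$, $C_2$, $F_d$, $L_d$ are all block-diagonal, every block of $\J$ is block-diagonal, and $\J_{22}$ is strictly proper (its feedthrough vanishes), so $\F_l(\J,\cdot)$ is well-posed. A direct state-space inversion gives
\[
\J^{-1}=\left[\begin{array}{c|cc} A & B_2 & -L_d \\ \hlinet C_2 & 0 & I \\ -F_d & I & 0 \end{array}\right],
\]
whose blocks are again block-diagonal and whose $(1,1)$ block is strictly proper. With these structural facts, both implications follow: if $\Q\in\Hinf\cap\Stau$, then $\K=\J_{11}+\J_{12}\Q(I-\J_{22}\Q)^{-1}\J_{21}\in\Stau$ by the product and inverse closure; conversely, if $\K\in\Stau$ stabilizes $\Pp$, then $\Q=\F_u(\J^{-1},\K)$ lies in $\Stau$ by the same argument applied to the blocks of $\J^{-1}$.

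The hard part will be establishing the inverse-closure of $\Stau$ rigorously, namely that $(I-\J_{22}\Q)^{-1}\in\Stau$. The subtlety is that the communication graph may contain cycles (in \cref{fig:cartoon}, nodes $1$ and $3$ are simultaneously ancestors and descendants of node $2$), so one cannot simply triangularize, and delays accumulate across multiple hops. I would resolve this by splitting any $M\in\Stau$ as $M=D+e^{-s\tau}R$, where $D$ is the block-diagonal part and $R$ collects the off-diagonal blocks with their common factor $e^{-s\tau}$ extracted. Factoring $(I-M)^{-1}=\bigl(I-e^{-s\tau}(I-D)^{-1}R\bigr)^{-1}(I-D)^{-1}$ and expanding the first factor as $\sum_{k\ge 0}e^{-ks\tau}\bigl((I-D)^{-1}R\bigr)^k$ shows (convergence holding on a suitable $\C_\alpha$) that the diagonal blocks remain proper while every off-diagonal block retains a single factor $e^{-s\tau}$ multiplying a convergent proper series. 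This makes the delay bookkeeping transparent and, together with the transitive sparsity, confirms $(I-M)^{-1}\in\Stau$; the remaining steps are routine LFT algebra.
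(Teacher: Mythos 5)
Your proposal follows essentially the same route as the paper's proof: invoke the classical unstructured parameterization to settle the stability content, observe that every block of $\J$ and of $\J^{-1}$ is block-diagonal, and reduce the equivalence $\K\in\Stau\Leftrightarrow\Q\in\Stau$ to the fact that $\Stau$ is an algebra closed under the products and inversions appearing in $\F_l(\J,\cdot)$ and $\F_u(\J^{-1},\cdot)$. The only difference is that you spell out, via the transitively closed sparsity pattern and the Neumann-series delay bookkeeping, the closure property that the paper simply asserts as ``easy to check''; this elaboration is correct.
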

\medskip
\begin{proof}
A similar approach was used in \cite[Thm.~11]{lessard2013structured} to parameterize the set of stabilizing controllers when $\K\in\Ss_0$ (no delays). In the absence of the constraint $\K\in\Stau$, the set of stabilizing controllers is given by $\set{\F_l(\J,\Q)}{\Q\in\Hinf}$ \cite[Thm.~12.8]{ZDG}. It remains to show that $\K\in\Stau$ if and only if $\Q\in\Stau$.
\blue{Expanding the definition of the lower LFT, we have
\begin{equation}\label{KQ1}
\K = \J_{11} + \J_{12}\Q \left( I - \J_{22}\Q \right)^{-1} \J_{21}.
\end{equation}
The matrices $A$, $B_2$, $C_2$, $F_d$, $L_d$ are block-diagonal, so $\J_{ij}$ is block-diagonal and therefore $\J_{ij} \in \Stau$.
The delays in our graph satisfy the triangle inequality, so $\Stau$ is closed under multiplication (whenever the matrix partitions are compatible). Moreover, $\Stau$ is quadratically invariant with respect to $\J_{22}$ \cite{rotkowitz2010convexity}. Therefore, if $\Q\in\Stau$, then $\Q \left( I - \J_{22}\Q \right)^{-1} \in \Stau$ \cite{rotkowitz2010convexity,lessard2014algebraic}, and we conclude from \eqref{KQ1} that $\K\in\Stau$.}
Applying the inversion property of LFTs, we have $\Q = \F_u(\J^{-1},\K)$. Now
\[
\J^{-1} =\left[\begin{array}{c|cc}
	   A & B_2 & -L_d \\[2pt] \hlinet
	   C_2&0&I \\
	   -F_d & I & 0\end{array}\right],
\]
so we can apply a similar argument to the above to conclude that $(\J^{-1})_{ij} \in \Stau$ and $\K\in\Stau \implies \Q\in\Stau$.
\end{proof}

We refer to $\Q$ in \cref{lem:stabilizing_controllers} as the \emph{Youla parameter}, due to its similar role as in the classical Youla parameterization~\cite{youla1976modern}.

\begin{rem}
Although the problem we consider is quadratically invariant (QI), the existing approaches for convexifying a general QI problem \cite{rotkowitz2005characterization}
or even a QI problem involving sparsity and delays \cite{rotkowitz2010convexity} require strong assumptions, such as $\Pp_{22}$ being stable or strongly stabilizable. Due to the particular delay structure of our problem, the parameterization presented in \cref{lem:stabilizing_controllers} does not require any special assumptions and holds for arbitrary (possibly unstable) $\Pp$.
\end{rem}

\begin{rem}\label{nominalcontrol_desc}
In the special case where $A$ is Hurwitz (so $\Pp$ is stable), we can substitute $F_d=0$ and $L_d=0$ in~\eqref{nominal_Jd} to obtain a simpler parameterization of stabilizing controllers.
\end{rem}

Using the parameterization of \cref{lem:stabilizing_controllers}, we can rewrite the synthesis problem \eqref{opt} in terms of the Youla parameter $\Q$. After simplification, we obtain the convex optimization problem
\begin{equation}\label{opt2}
\begin{aligned}
	\underset{\Q}{\minimize}
	\qquad & \normm{\Tx_{11} + \Tx_{12} \Q \Tx_{21}}_2^2 \\
	\subject \qquad & \Q \in \blue{\Hinf} \cap \Stau.
\end{aligned}
\end{equation}
where 
$\Tx =\bmat{\Tx_{11} & \Tx_{12} \\ \Tx_{21} & 0}$ 
\begin{align}\label{T}
	=\squeezemat{1pt}{\left[\begin{array}{cc|cc}
		A+B_2F_d & -B_2F_d & B_1& B_{2} \\
		0&A+L_dC_2&B_1+L_dD_{21}&0\\ \hlinet
		C_1+D_{12}F_d&-D_{12}F_d&0&D_{12} \\
		0&C_2 & D_{21} & 0\end{array}\right]}.
\end{align}

\begin{rem}
	The convex problem \eqref{opt2}--\eqref{T} is similar to its unstructured counterpart \cite[Thm.~12.16]{ZDG}, except we have the additional constraint $\Q\in\Stau$ on the Youla parameter.
\end{rem} 

\begin{rem}
	\blue{We use $L\defeq L_{\textup{cen}}=L_d=\blkdiag(\{L^i\})$ throughout the rest of the article. This choice of $L$ yields a $\Q_{\textup{opt}}$ with reduced state dimension and simplifies our exposition.}
\end{rem}


\subsection{Optimal controller without delays}\label{sec:result_1}

When there are no processing delays ($\tau=0$), the optimal structured controller is rational. We now provide an explicit state-space formula for this optimal $\K$.

\begin{thm} \label{thm:3}
\blue{Consider the structured optimal control problem described in \cref{sec:problem_statement}} and suppose \cref{Ass:System} holds. Choose a block-diagonal $F_d$ such that $A+B_2 F_d$ is Hurwitz. 
A realization of the $\Qopt$ that solves \eqref{opt2} in the case $\tau=0$ is
\begin{align}\label{Qopt1}
	\Qopt={\left[\begin{array}{c|c}%
			\bar{A}+\bar{B}\bar{F}&-\bar{L}\bar{\1}_p\\\hlinet \bar{\1}_m^\tp(\bar{F}-\bar{F}_d)&0\end{array}\right]}
\end{align}
and a corresponding $\K_\textup{opt}$ that solves \eqref{opt} is
\begin{align}\label{Kopt1}
	\K_{\textup{opt}} =\left[\begin{array}{c|c}%
	\bar{A}+\bar{B}\bar{F}+\bar{L}\bar{C}\bar{\1}_n\bar{\1}_{n}^\tp&-\bar{L}\bar{\1}_p\\\hlinet \bar{\1}_m^\tp \bar{F}&0\end{array}\right].
\end{align}
In \eqref{Qopt1}--\eqref{Kopt1}, we defined the new symbols
     \begin{gather*}
     	\bar{A} \!\defeq\! I_{N} \!\otimes\! A, \hspace{8pt}
     	\bar{B} \!\defeq\! I_N \!\otimes\! B_2, \hspace{8pt}
     	\bar{C} \!\defeq\! I_N \!\otimes\! C_2, \hspace{8pt}
		 \bar{F}_d \!\defeq\! I_{N} \!\otimes\! F_d,\\
		 \bar{\1}_n \defeq 1_N\otimes I_n, \qquad
     	\bar{\1}_m \defeq 1_N\otimes I_m, \qquad
     	\bar{\1}_p \defeq 1_N\otimes I_p.
     \end{gather*}
Matrices $\bar L$ and $\bar F$ are block-diagonal concatenations of zero-padded LQR and Kalman gains for each agent. Specifically,
$\bar{F} \defeq \blkdiag(\{E_{m_{\underline{i}}}F^i E_{n_{\underline{i}}}^\tp\})$ and
$\bar{L} \defeq \blkdiag(\{E_{n_{i}}L^iE_{p_{i}}^\tp\})$
for all $i\in [N]$, where $F^i$ and $L^i$ are defined in \eqref{eq:ARE_decen_nodel}.
\end{thm}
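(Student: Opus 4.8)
The plan is to exploit the block-diagonal structure of the plant to decompose the convex problem \eqref{opt2} into $N$ independent single-agent subproblems, solve each by classical $\Htwo$ theory, and reassemble the pieces. Throughout I would take $L_d = L = L_{\textup{cen}} = \blkdiag(\{L^i\})$ (block-diagonal, as permitted since the estimation problem already decouples) and keep $F_d$ as an arbitrary block-diagonal stabilizing gain, anticipating that the final $\K_{\textup{opt}}$ is independent of it.

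First I would observe that, because $A$, $B_1$, $B_2$, $C_2$, $D_{21}$ are block-diagonal and $F_d$, $L_d$ are chosen block-diagonal, the realization \eqref{T} shows that $\Tx_{21}$ is block-diagonal across the $N$ agents: its $j$-th block column is a map $w_j\mapsto y_j$ supported on agent $j$ alone, whereas $\Tx_{12}$ keeps its (dense) regulated output $z$ coupled across agents but is block-column structured in its input. Writing $M \defeq \Tx_{11} + \Tx_{12}\Q\Tx_{21}$ for the closed-loop map and using that the squared $\Htwo$ norm of a transfer matrix is the sum of the squared norms of its block columns, I would obtain
\[
\normm{M}_2^2 = \sum_{j=1}^N \normm{M_{:j}}_2^2, \qquad M_{:j} = \Tx_{11_{:j}} + \Tx_{12}\,\Q_{:j}\,\Tx_{21_{jj}}.
\]
The sparsity constraint $\Q\in\Ss_0$ forces the $j$-th block column $\Q_{:j}$ to be supported on the descendant rows $\underline{j}$, and since $\Tx_{21_{jj}}$ is supported on agent $j$, the summand $M_{:j}$ depends only on $\Q_{:j}$. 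Hence the terms decouple and the optimization separates into $N$ independent convex problems, one per source agent $j$.

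The key step is to identify each column subproblem with the classical (centralized) $\Htwo$ synthesis problem for the single-agent four-block plant $\Pp_j$ of \eqref{Pi}. This is where the bookkeeping is most delicate: I must verify that restricting the globally parameterized data $(\Tx_{11_{:j}},\Tx_{12},\Tx_{21_{jj}})$ — built with the global block-diagonal gains $F_d$ and $L=\blkdiag(\{L^i\})$ — and using that $\Tx_{12}$ only sees the descendant rows $\underline{j}$ of $\Q_{:j}$, reproduces exactly the model-matching data of $\Pp_j$ with the descendant-block control channel and the local measurement channel. Granting this, classical $\Htwo$ theory \cite[Thm.~12.16]{ZDG} applied to $\Pp_j$ — whose Riccati assumptions hold by \cref{Ass:System} — yields the optimal column $\Q_{:j}$ expressed through the subproblem Riccati data $(X^j,F^j)$ and $(Y^j,L^j)$ of \eqref{eq:ARE_decen_nodel}.

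Finally I would reassemble. Zero-padding each subproblem solution back into the global index set via the selectors $E_{n_{\underline{i}}}$, $E_{m_{\underline{i}}}$, $E_{p_i}$ and stacking the $N$ realizations produces the Kronecker/block-diagonal form \eqref{Qopt1}: the $N$ independent copies of the state live in $\bar{A}=I_N\otimes A$, the measurement $y$ is broadcast to every copy by $\bar{\1}_p = 1_N\otimes I_p$ while $\bar{L}$ selects $y_i$ for copy $i$, the controls are summed by $\bar{\1}_m^\tp = 1_N^\tp\otimes I_m$, and the output gain $\bar{F}-\bar{F}_d$ records the difference between the optimal per-agent gain $F^i$ and the nominal $F_d$. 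To obtain \eqref{Kopt1} I would substitute $\Qopt$ into $\K=\F_l(\J,\Q)$ from \cref{lem:stabilizing_controllers} and simplify the state-space realization, at which point the $\bar{F}_d$ terms carried by $\J$ cancel those in $\Qopt$, confirming that $\K_{\textup{opt}}$ does not depend on the auxiliary choice of $F_d$. I expect the main obstacle to be the subproblem-identification step together with this final realization algebra: both are routine in principle but demand careful index tracking through the zero-paddings and the Kronecker structure.
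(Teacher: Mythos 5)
Your proposal is sound, but it takes a genuinely different route from the paper's own proof of \cref{thm:3}. The paper proceeds by \emph{verification}: it posits $\Qopt$ in \eqref{Qopt1}, forms the closed-loop map, computes its controllability Gramian (\cref{lem:clmap+gram}), and then checks the normal equations $\Tx_{12}^\sim(\Tx_{11}+\Tx_{12}\Qopt\Tx_{21})\Tx_{21}^\sim\in(\Htwo\cap\Ss_0)^\perp$ block by block, showing via a state transformation that every offending mode in each block $\Omega_{ij}$ with $j\to i$ is uncontrollable or unobservable, hence $\Omega_{ij}\in\Htwo^\perp$. You instead \emph{derive} the answer by splitting the cost over block columns (legitimate, since $\Tx_{21}$ is block-diagonal and the sparsity constraint on $\Q$ is column-wise), reducing each column to an unconstrained centralized $\Htwo$ problem for the sub-plant $\Pp_j$ of \eqref{Pi}, and reassembling by zero-padding. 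This is precisely the strategy the paper reserves for the general delayed case in \cref{App:D} (proof of \cref{thm:5}), of which \cref{thm:3} is the $\tau=0$ specialization; the step you flag as ``delicate bookkeeping''---identifying $(\Tx_{11_{:j}},\Tx_{12_{:\underline{j}}},\Tx_{21_{jj}})$ with the model-matching data of $\Pp_j$ under the substitutions $F_d\mapsto F_{d_{\underline{j}\underline{j}}}$ and $L_d\mapsto E_{n_{\underline{j}}}^\tp E_{n_j}L^j$---is exactly the identification carried out there, so it is a real but fillable obligation rather than a gap. The trade-off: your constructive route explains \emph{where} \eqref{Qopt1} comes from and generalizes immediately to $\tau>0$, at the price of the subproblem-identification and reassembly algebra; the paper's verification route for \cref{thm:3} needs the answer in advance and a somewhat intricate controllability/observability analysis of the cross-terms, but in exchange yields the Gramian structure (\cref{lem:clmap+gram}) and orthogonality facts that are reused in the cost decomposition of \cref{thm:7}.
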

\begin{proof}
	See \cref{App:B}.
\end{proof}

\begin{rem}
The optimal controller \eqref{Kopt1} can also be expressed explicitly in terms of the adjacency matrix; see for example \cite{shah2013cal, kashyap2019explicit}. We opt for the realization  \eqref{Kopt1} as this expression generalizes more readily to the case with delays.
\end{rem}

\begin{rem}
Since agents can act as relays, any cycles in the communication graph can be collapsed and the associated nodes can be aggregated \blue{when there are no delays}. For example, the graph of \cref{fig:cartoon} would become the \blue{four-node diamond graph $\{1\} \to \{3,4\} \to \{5\}$, and $\{1\} \to \{2\} \to \{5\}$.} So in the delay-free setting, there is no loss of generality in assuming the communication graph is acyclic.
\end{rem}

\begin{rem}
	Although the optimal $\Qopt$ \eqref{Qopt1} and associated $\J$ \eqref{nominal_Jd} depend explicitly on $F_d$, the optimal $\K_{\textup{opt}}$ \eqref{Kopt1} does not. 
\end{rem}

\subsection{Optimal controller with delays}\label{sec:result_2}
In this section, we generalize \cref{thm:3} to include \blue{an arbitrary but fixed processing delay $\tau > 0$}.
To this end, we introduce a slight abuse of notation to aid in representing non-rational transfer functions. We generalize the notation of \eqref{ssnotation} to allow for $A,B,C,D$ that depend on $s$. So we write:
\[
\stsp{A(s)}{B(s)}{C(s)}{D(s)} \defeq D(s) + 	C(s) \left( s I - A(s) \right)^{-1} B(s).
\]

\begin{thm}
	\label{thm:5}
	Consider the setting of \cref{thm:3}.
	The transfer function of  $\Qopt\in\blue{\Hinf}\cap\Stau$ that solves \eqref{opt2} for any $\tau\geq 0$ is
	\begin{equation}\label{Qopt2_aliter}
		\Qopt =
\squeezemat{1pt}		\left[\begin{array}{cc|c}
			\bar{A}\!+\!\bar{L}\bar{C}&\tilde{B}\tilde{F}\!-\!\bar{L}\bar{\Pi}_b\tilde{F}\!-\!\bar{B}\bar{\Pi}_u\tilde{F} & 0\\
			\bar{L}\bar{C}&\bar{A}\!+\!\tilde{B}\tilde{F}\!-\!\bar{L}\bar{\Pi}_b\tilde{F} & -\bar{L}\bar{\1}_p \\ \hlinet
			\bar{\1}_m^\tp\bar{\Lambda}_m\bar{F}_d&\bar{\1}_m^\tp\bar{\Lambda}_m(\bar{\Pi}_u\tilde{F}\!-\!\bar{F}_d) & 0
		\end{array}\right]
	\end{equation}
and a corresponding $\K_\textup{opt}$ that solves \eqref{opt} is
\begin{equation}\label{Kopt1b}
	\K_{\textup{opt}} =
	\left[\begin{array}{c|c}
		\bar{A}\!+\!\tilde{B}\tilde{F}\!+\!\bar{L}\bar{C}\bar{\1}_n\bar{\1}_{n}^\tp\bar{\Lambda}_n\!-\!\bar{L}\bar{\Pi}_b\tilde{F} & -\bar{L}\bar{\1}_p\\ \hlinet
		\bar{\1}_m^\tp\bar{\Lambda}_m \bar{\Pi}_u\tilde{F} & 0
	\end{array}\right],
\end{equation}
	where $\bar{A}$, $\bar{L}$, $\bar{F}_d$, $\bar{\1}_n$, $\bar{\1}_m$, $\bar{\1}_p$,  are defined in \cref{thm:3}. The remainder of the symbols are defined as follows. We apply the loop-shifting transformation $(\tilde{\Pp}_i,\Pi_{u_i},\Pi_{b_i})=\Gamma(\Pp_{i},\Lambda_m^i)$, where $\Pp_{i}$, $\tilde{\Pp}_i$, $\tilde{F}^i$ are defined in \cref{App:ARE}, and 
	\begin{gather*}
		\tilde{F}\defeq\blkdiag(\{E_{m_{\underline{i}}}\tilde{F}^iE_{n_{\underline{i}}}^\tp\}),\quad
		\bar{\Pi}_{b}\defeq\blkdiag(\{E_{p_{\underline{i}}}\Pi_{b_i}E_{m_{\underline{i}}}^\tp\}),\\
		\tilde{B}\defeq\blkdiag(\{E_{n_{\underline{i}}}\tilde{B}_{2_{\underline{ii}}}E_{m_{\underline{i}}}^\tp\}),\quad 
		\bar{\Pi}_{u}\defeq\blkdiag(\{E_{m_{\underline{i}}}\Pi_{u_i}E_{m_{\underline{i}}}^\tp\}),\\
		\bar{\Lambda}_{k}\defeq\blkdiag(\{E_{k_{\underline{i}}}\Lambda_{k}^iE_{k_{\underline{i}}}^\tp\}), \quad \textit{for} \; k\in\{m,n\}.
	\end{gather*}
\end{thm}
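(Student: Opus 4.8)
The plan is to reduce the delayed problem to the already-solved delay-free problem of \cref{thm:3} by applying the loop-shifting transformation $\Gamma$ from \cref{sec:delay} separately to each agent's subproblem. Following the proof of \cref{thm:3} (see \cref{App:B}), the convex problem \eqref{opt2} decouples, across the \emph{source} agent $i$, into $N$ independent $\Htwo$ subproblems: subproblem $i$ governs how the innovations extracted from measurement $y_i$ should drive the descendants $\underline{i}$, and is described by the four-block plant $\Pp_i$ of \eqref{Pi}. The crucial observation is that the delay constraint built into $\Stau$ acts on subproblem $i$ exactly as an \emph{adobe input delay}: the information from $y_i$ reaches agent $i$ instantaneously (via $\K_{ii}$) but reaches each strict descendant with delay $\tau$ (via $e^{-s\tau}\K_{ji}$), which is precisely the action of the adobe delay matrix $\Lambda_m^i$.

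With this identification, I would apply $\Gamma$ to obtain $(\tilde\Pp_i,\Pi_{u_i},\Pi_{b_i}) = \Gamma(\Pp_i,\Lambda_m^i)$. By the norm-decomposition property recorded in \cref{sec:delay}, the cost of subproblem $i$ splits as $\normm{\F_l(\Pp_i,\Lambda_m^i\K_i)}_2^2 = \normm{\Delta_i}_2^2 + \normm{\F_l(\tilde\Pp_i,\tilde\K_i)}_2^2$, with $\Delta_i$ independent of the decision variable, so minimizing reduces to a \emph{delay-free} $\Htwo$ problem on the rational plant $\tilde\Pp_i$. Since $\tilde\Pp_i$ retains the estimation data of $\Pp_i$ (hence the same gain $L^i$) but has the modified control ARE \eqref{eq:ARE_decen_del:1} with gain $\tilde F^i$, I would solve this subproblem by invoking \cref{thm:3} applied to $\tilde\Pp_i$, then transform back via the modified Smith predictor $\K_i = \Pi_{u_i}\tilde\K_i(I-\Pi_{b_i}\tilde\K_i)^{-1}$. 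Assembling the $N$ subproblem solutions in global coordinates through the zero-padding projections $E_{(\cdot)}$ and $\blkdiag(\cdot)$ yields the block-diagonal quantities $\tilde F$, $\tilde B$, $\bar\Pi_u$, $\bar\Pi_b$ and the delay matrices $\bar\Lambda_m,\bar\Lambda_n$ appearing in \eqref{Qopt2_aliter}--\eqref{Kopt1b}; the delay pattern of $\Stau$ is recovered because each $\Lambda_m^i$ injects the factor $e^{-s\tau}$ on exactly the strict-descendant channels. The Youla form \eqref{Qopt2_aliter} then follows either by assembling the subproblem Youla parameters or by applying the LFT inversion $\Qopt=\F_u(\J^{-1},\K_{\textup{opt}})$ of \cref{lem:stabilizing_controllers}, and should specialize to \eqref{Qopt1}--\eqref{Kopt1} when $\tau=0$ (where $\Pi_{u_i}\to I$, $\Pi_{b_i}\to 0$, $\tilde F^i\to F^i$, and $\bar\Lambda_m,\bar\Lambda_n\to I$).

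The main obstacle will be the concluding state-space algebra. The delay-free optimal controller produced by \cref{thm:3} is rational, whereas the FIR Smith-predictor blocks $\Pi_{u_i},\Pi_{b_i}$ carry the delay $e^{-s\tau}$, so forming the feedback interconnection $\Pi_{u_i}\tilde\K_i(I-\Pi_{b_i}\tilde\K_i)^{-1}$ creates an algebraic loop and a large, redundant realization. Reducing this cascade to the compact forms \eqref{Qopt2_aliter} and \eqref{Kopt1b} — and in particular verifying that $\bar\Lambda_m$ and $\bar\Lambda_n$ enter exactly the output and state-coupling terms as claimed — requires careful bookkeeping of the FIR realizations of $\Pi_{u_i},\Pi_{b_i}$ (as defined through $\Gamma$ in \cref{sec:appendix_gamma}) and of the embeddings used for each subproblem. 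The final structural check, that the assembled $\Qopt$ lies in $\Htwo\cap\Stau$, then follows: stability is inherited from the stabilizing property of the subproblem Riccati solutions, and the delay sparsity from the block-diagonal assembly.
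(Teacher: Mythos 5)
Your proposal is correct and follows essentially the same route as the paper's proof in \cref{App:D}: decompose \eqref{opt2} column-wise using the block-diagonality of $\Tx_{21}$, recognize each column subproblem as an unstructured $\Htwo$ problem for $\Pp_i$ with adobe input delay $\Lambda_m^i$, apply $\Gamma$ to reduce to a delay-free LQG problem on $\tilde\Pp_i$ with gains $\tilde F^i$ and $L^i$, invert via the modified Smith predictor, and reassemble with the zero-padding embeddings. The only cosmetic difference is that the paper makes explicit the intermediate step of commuting $\Lambda_m^i$ past $\J_i$ (e.g., $B_{2_{\underline i\underline i}}\Lambda_m^i = \Lambda_n^i B_{2_{\underline i\underline i}}$) before invoking $\Gamma$, which you subsume into your identification of the delay as an adobe input delay.
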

\medskip
\begin{proof}
	See \cref{App:D}.
\end{proof}

The transfer matrices $\Q_\textup{opt}$ in \eqref{Qopt2_aliter} and $\K_\textup{opt}$ in \eqref{Kopt1b} are not rational, due to the presence of the FIR blocks $\bar{\Pi}_u$, $\bar{\Pi}_b$, and delay blocks \blue{$\bar{\Lambda}_m$ and $\bar{\Lambda}_n$}. Consequently, we cannot write standard state-space realizations as in \cref{thm:3}. When $\tau=0$, we have $\bar{\Pi}_u=I$, $\bar{\Pi}_b=0$, $\bar\Lambda_m = I$, $\tilde F = \bar F$, and $\tilde B = \bar B$, and we recover the results of \cref{thm:3}.


\section{Agent-level controllers}\label{sec:result_3}
\begin{figure*}
	\centering
	\includegraphics{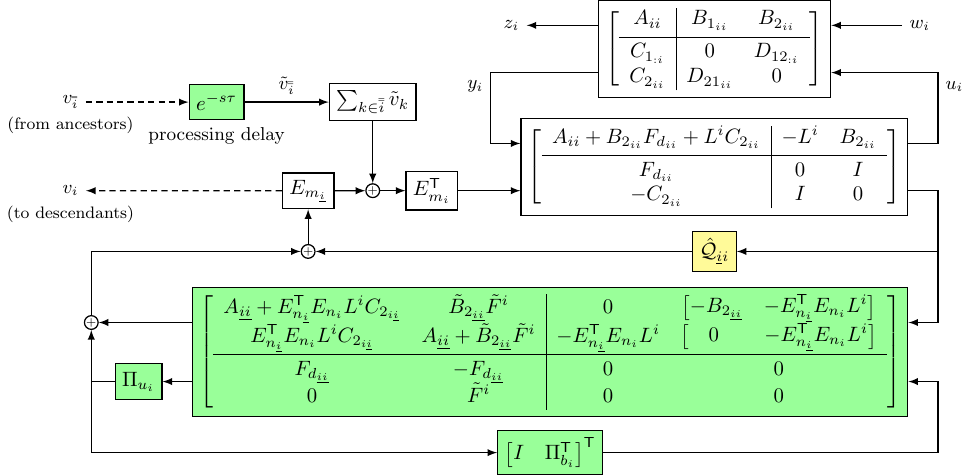}
	\caption{Agent-level implementation of all structured stabilizing controllers, parameterized by $\hat\Q\in\blue{\Hinf}\cap\Ss_0$. Here, $F_d$ is any block-diagonal matrix such that \blue{$A_{ii} + B_{2_{ii}} F_{d_{ii}}$} is Hurwitz.
	The $\Htwo$-optimal controller is achieved when $\hat{\Q}=0$, and results in the simplified diagram of \cref{fig:agent-level-opt}. The blocks that depend on the processing delay $\tau$ are colored in green. All symbols are defined in \cref{thm:6}.}
	\label{fig:agent-level-subopt}
\end{figure*}

\blue{The optimal controller presented in \cref{thm:3}} is generally not minimal. For example, $\K_\textup{opt}$ in \eqref{Kopt1} has a state dimension of $Nn$, which means a copy of the global plant state for each agent. However, if we extract the part of $\K_\textup{opt}$ associated with a particular agent, there is a dramatic reduction in state dimension. So in a distributed implementation of this controller, each agent would only need to store a small subset of the controller's state. \blue{A similar reduction exists for the optimal controller for the delayed problem presented in \cref{thm:5}.}

Our next result presents \blue{reduced implementations} for these \emph{agent-level} controllers and characterizes the information each agent should store and communicate with their neighbors. We find that Agent $i$ simulates its descendants' dynamics, and so has dimension $n_{\underline i}$, which is \blue{at least $N$ times smaller} than the dimension $Nn$ of the aggregate optimal controller from \cref{thm:3}.

\begin{figure}[ht]
	\centering
	\includegraphics{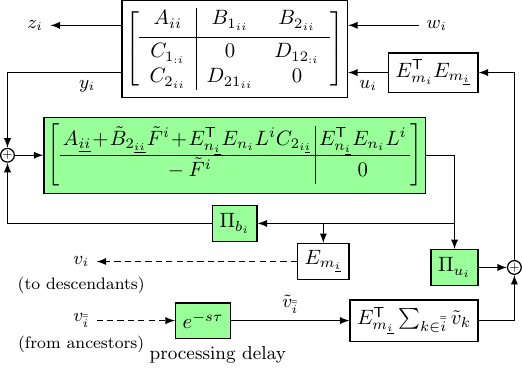}
	\caption{Agent-level implementation of the $\Htwo$-optimal controller with processing delays. This is the result of setting $\hat{\Q}=0$ in \cref{fig:agent-level-subopt}. The blocks that depend on the processing delay $\tau$ are colored in green. All symbols are defined in \cref{thm:6}.}
	\label{fig:agent-level-opt}
\end{figure}

\begin{thm}\label{thm:6}
	Consider the setting of \cref{thm:3} with $\tau\geq0$. The agent-level implementation of all structured stabilizing controllers, parameterized by $\hat \Q\in\blue{\Hinf} \cap \Ss_0$, is shown in \cref{fig:agent-level-subopt}. Here, 
	the optimal controller is achieved when $\hat \Q=0$. In this case, we obtain the simpler structure of \cref{fig:agent-level-opt}. All symbols used  are defined in \cref{thm:3,thm:5}.
\end{thm}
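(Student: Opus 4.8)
The plan is to obtain the agent-level diagrams directly from the aggregate formulas of \cref{thm:3,thm:5} and the parameterization of \cref{lem:stabilizing_controllers}, and then to verify by inspection that the composed diagram realizes the same transfer matrix. The organizing observation is that every matrix appearing in \eqref{Kopt1b} and \eqref{Qopt2_aliter} is assembled from pieces that are $\blkdiag$ over the agent index $i$, with each $i$-th block pre- and post-multiplied by the selection matrices $E_{n_{\underline i}}$, $E_{m_{\underline i}}$, $E_{p_i}$, while the only inter-agent coupling enters through the rank-deficient factors $\bar\1_n=1_N\otimes I_n$ and $\bar\1_n^\tp$. Thus the $Nn$-dimensional aggregate state splits into $N$ copies of the global plant, and within copy $i$ the gains $\tilde B\tilde F$, $\bar L\bar C$, $\bar\Pi_b\tilde F$, $\bar\Pi_u\tilde F$ only read from and write to the coordinates indexed by the descendants $\underline i$.

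First I would reduce each copy. Because the input of copy $i$ injects (through $E_{n_i}$) only into Agent $i$'s own coordinates and the output reads (through $\tilde F^i$ and $E_{n_{\underline i}}^\tp$) only from its descendant coordinates, the coordinates outside $\underline i$ are unreachable and unobservable and may be deleted without altering the transfer matrix. This collapses copy $i$ from dimension $n$ to dimension $n_{\underline i}$ and exposes Agent $i$'s controller as a simulator of its descendants' states: a Kalman filter driven by the local measurement $y_i$ through $L^i$, together with the loop-shifted regulator gain $\tilde F^i$ and the FIR/delay blocks $\Pi_{u_i}$, $\Pi_{b_i}$, $\Lambda_m^i$ produced by $\Gamma(\Pp_i,\Lambda_m^i)$. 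Tracking which of these blocks carry $e^{-s\tau}$ identifies exactly the green (delay-dependent) components in \cref{fig:agent-level-opt}.

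Next I would account for the communication. Expanding $\bar L\bar C\,\bar\1_n\bar\1_n^\tp\,\bar\Lambda_n$ and using that $C_2$ is block-diagonal, the $(i,j)$ block is nonzero precisely when $i\in\underline j$; hence Agent $i$ receives, from each ancestor $j$, that ancestor's delayed estimate of Agent $i$'s state (delayed through $\Lambda_n^j$), which reproduces the communication arrows and the green delay on the incoming edges in the figure. For the general stabilizing case I would instead begin from $\K=\F_l(\J,\Q)$ with $\Q\in\Htwo\cap\Stau$ and use the per-agent loop-shifting of \cref{sec:delay} to extract the delays from $\Q$, leaving a delay-free Youla parameter $\hat\Q\in\Htwo\cap\Ss_0$; inserting this $\hat\Q$ into the same interconnection produces \cref{fig:agent-level-subopt}. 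Finally, setting $\hat\Q=0$ removes the free block, and, by the $\tau=0$ bookkeeping recorded after \cref{thm:5}, collapses \cref{fig:agent-level-subopt} to \cref{fig:agent-level-opt}, which I would confirm realizes \eqref{Kopt1b}.

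The main obstacle is the reduction step: I must verify that the coordinates outside $\underline i$ are genuinely uncontrollable and unobservable in the coupled interconnection---not merely in an isolated copy---so that their deletion preserves the transfer matrix globally once the shared signals carried by $\bar\1_n^\tp\bar\Lambda_n$ are reintroduced. Correctly threading the FIR blocks $\Pi_{u_i}$, $\Pi_{b_i}$ and the adobe delay $\Lambda_m^i$ through this reduction---so that the modified Smith-predictor structure of \cref{fig:Mirkin_Gamma} reappears inside each agent---is the delicate bookkeeping on which the rest of the argument rests.
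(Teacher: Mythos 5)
Your route is sound but genuinely different from the paper's. You propose to start from the aggregate $Nn$-dimensional realizations \eqref{Kopt1b}--\eqref{Qopt2_aliter} and perform a model reduction (deleting, in copy $i$, the coordinates outside $\underline i$ as unreachable/unobservable), then read the communication pattern off the coupling term $\bar L\bar C\bar\1_n\bar\1_n^\tp\bar\Lambda_n$. The paper instead never forms the aggregate picture: it works with the interconnection $\K=\F_l(\J,\Qopt+\Q_\Delta)$ from \cref{lem:stabilizing_controllers}, notes that the $\J$-dynamics decouple agent-by-agent because all of $A,B_2,C_2,F_d,L$ are block-diagonal, and isolates the only inter-agent coupling in $v=\Q\eta$, which it then splits column-wise using the factorization $\Q_{\underline kk}=\Lambda_m^k(\tilde\Q_{\underline kk}+\hat\Q_{\underline kk})$ already established in the proof of \cref{thm:5}. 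Since \eqref{Qt_longeqn} hands each column a realization of dimension $2n_{\underline i}$ for free, no reduction argument is needed, the sub-optimal parameter $\hat\Q$ is carried along from the start rather than grafted on afterward, and the transmitted signal is identified concretely as the components $v_{i,j}$ of $(\tilde\Q_{\underline ii}+\hat\Q_{\underline ii})\eta_i$ destined for descendants $j$. Your reduction step does go through (within copy $i$ the input $-E_{n_i}L^iy_i$, the feedback $\tilde B\tilde F$, and the read-out via $\bar\Lambda_n$ and $\tilde F$ all live on the $\underline i$ coordinates, so the rest is inert even under the coupling), and you rightly flag this as the delicate point; but be aware that reading the coupling off the aggregate $\K_\textup{opt}$ realization yields a state-sharing decomposition of the communication that does not immediately coincide with the single-vector $v_i$ broadcast shown in \cref{fig:agent-level-subopt,fig:agent-level-opt} --- to land on that specific architecture you would still, in effect, have to rediscover the $\J$--$\Q$ column factorization. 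What your approach buys is a direct verification that the figures realize \eqref{Kopt1b}; what the paper's buys is a constructive derivation in which the architecture, the transmitted signals, and the role of $\hat\Q$ all emerge without any minimality argument.
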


\begin{proof}
	See \cref{App:E}.
\end{proof}

\subsection{Interpretation of optimal controller} 

\cref{fig:agent-level-subopt} shows that Agent $i$ transmits the same signal $v_{i}$ to each of its strict descendants. When an agent receives the signals \blue{$v_{\bar{\bar{i}}}$} from its strict ancestors \blue{$\bar{\bar{i}}$}, it selectively extracts and sums together certain components of the signals. To implement the optimal controller, each agent only needs to know the dynamics and topology of its descendants.

If the network has the additional property that there is at most one directed path connecting any two nodes\footnote{Also known as a \emph{multitree} or a \emph{diamond-free poset}.}, then the communication scheme can be further simplified. Since Agent $i$'s decision $u_i$ is a sum of terms from all ancestors, but each ancestor has exactly one path that leads to $i$, the optimal controller can be implemented by transmitting all information to \emph{immediate descendants only} and performing recursive summations. This scheme is illustrated for a four-node chain graph in \cref{fig:multitree_transmission}.\looseness=-1

\begin{figure}[htb]
	\centering
	\includegraphics{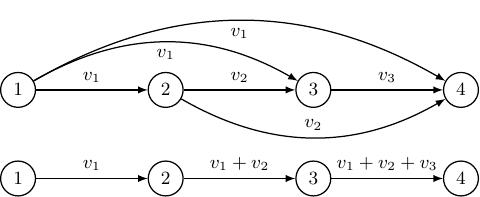}
	\caption{Four-agent chain graph with standard broadcast (top) and efficient immediate-neighbor implementation (bottom), which is possible because this graph is a multitree.}
	\label{fig:multitree_transmission}
\end{figure}

\begin{rem}
	The agent-level controller from \cref{fig:agent-level-opt} can be represented as the combination of an observer with transfer matrix $\mathcal{T}_{\underline{ii}}\defeq (sI-A_{\underline{ii}}-E_{n_{\underline{i}}}^\tp E_{n_{i}}L^i C_{2_{i\underline{i}}})^{-1}$, and a regulator with an LQR gain $\tilde{F}^i$ in \cref{fig:agent-level-opt_v2}. This yields a separation structure reminiscent of standard LQG theory~\cite{ZDG}. 
\end{rem}

\begin{figure}[ht]
	\centering
	\includegraphics{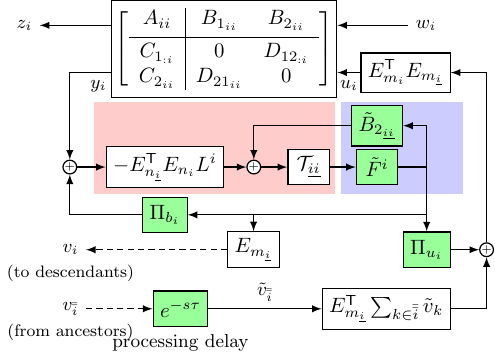}
	\caption{Agent-level implementation of the $\Htwo$-optimal controller with processing delays, featuring an observer (red) and regulator (blue) separation structure. Here $\mathcal{T}_{\underline{ii}}\defeq(sI-A_{\underline{ii}}-E_{n_{\underline{i}}}^\tp E_{n_{i}}L^i C_{2_{i\underline{i}}})^{-1}$ is the transfer matrix of the observer dynamics.}
	\label{fig:agent-level-opt_v2}
\end{figure}

\begin{rem}
	Compared to the architecture proposed in \cite[Fig.~4]{kashyap2020agent}, 
	the agent-level optimal controller in \cref{fig:agent-level-opt} is more efficient because each agent transmits a single vector $v_{i}$ to its descendants, instead of two.  
\end{rem}

\begin{rem}
	The controller in \cref{fig:agent-level-opt} has the form of a feed-forward Smith predictor, similar to \cref{fig:Mirkin_Gamma} (bottom left). The FIR block $\Pi_{u_i}$ compensates for the effect of adobe delay. Similarly, the FIR block $\Pi_{b_i}$ resembles the internal feedback in traditional dead-time controllers.
\end{rem}


\section{Characterizing the cost}\label{sec:result_4}

In this section, we characterize the cost of any structured stabilizing controller. The cost is defined as $J\defeq \normm{\F_l(\Pp,\K)}_2^2=\normm{\Tx_{11} + \Tx_{12} \Q \Tx_{21}}_2^2$, where $\K$ is feasible for~\eqref{opt} or equivalently, $\Q=\F_u(\J^{-1},\K)$ is feasible for~\eqref{opt2} (see \cref{lem:stabilizing_controllers}). We show how to interpret the cost in different ways, and how to compute it efficiently. We illustrate our result using an example with $N=4$ agents.\looseness=-1

\begin{figure}[ht]
	\centering
	\includegraphics{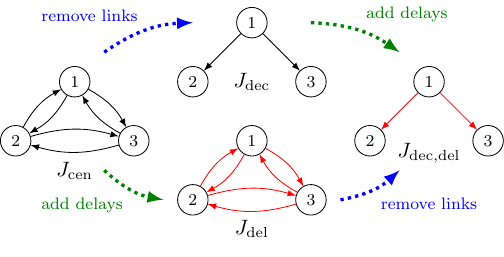}\vspace{-5mm}
	\caption{\blue{Hierarchy of optimal costs for different communication patterns in a three-agent example. Additional cost is incurred if links are removed (blue dotted arrows), or if processing delay is added (green dottted arrows). Delayed edges are red. In this example, $\Dcen \leq \Ddecen \leq \Ddecdel$ and $\Dcen \leq \Dcendel \leq \Ddecdel$ but $\Ddecen$ and $\Dcendel$ are not comparable.}}\vspace{-5mm}
	\label{fig:cost_interp}
\end{figure}
\begin{figure}[ht]
	\centering
	\includegraphics{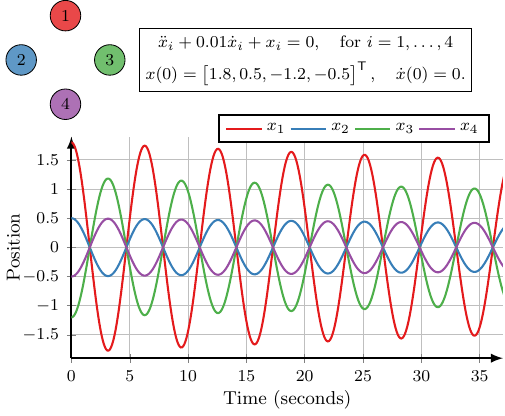}
	\caption{\blue{Open-loop zero-input response of a network of four lightly damped oscillators.}}
	\label{fig:synchronization_openloop}
\end{figure}
\begin{figure}[ht]
	\centering
	\includegraphics{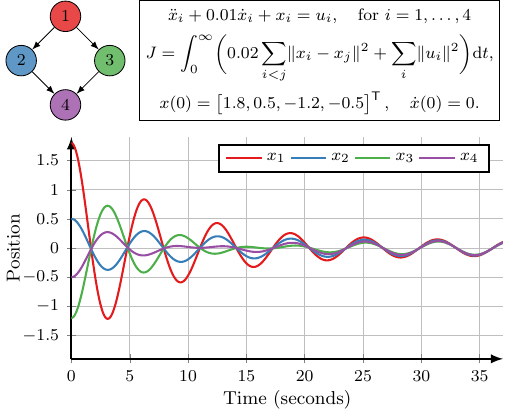}
	\caption{\blue{Closed-loop response of the four-oscillator system from \cref{fig:synchronization_openloop} using the optimal controller from \cref{thm:3} for a diamond-shaped communication graph with no processing delay. The oscillators leverage the communication network to achieve synchronization.}}
	\label{fig:synchronization_example}
\end{figure}
\begin{figure}[ht]
	\centering
	\includegraphics{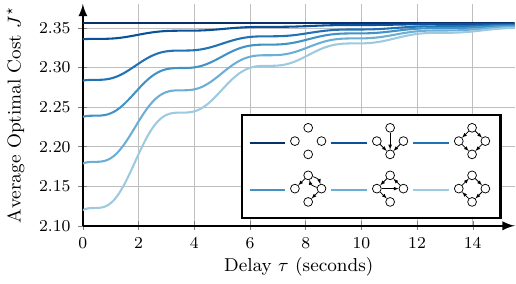}
	\caption{The average optimal cost, as a function of processing delay, for the 4-agent system of \cref{fig:synchronization_example} with different network topologies. For each topology, the cost is an increasing function of the processing delay.}
	\label{fig:cost_vs_delays_sparsity}
\end{figure}

\begin{thm}
	\label{thm:7}
	Consider the setting of \cref{thm:3}.
	The optimal (minimal) costs for the cases: a fully connected graph with no delays, a decentralized graph with no delays, a fully connected graph with delays, and a decentralized graph with delays are:
	\begin{subequations}
	\begin{align}
		\label{Cost:1_2}\Dcen&= \trace(Y_{\textup{cen}}C_1^\tp C_1)+\trace(X_{\textup{cen}}L D_{21} D_{21}^\tp L^\tp),\\
		\label{Cost:2}\Ddecen &=\trace(Y_{\textup{cen}}C_1^\tp C_1)+\trace(X_{\textup{dec}} L D_{21}D_{21}^\tp L^\tp),\\
		\label{Cost:3}\Dcendel &=\trace(Y_{\textup{cen}}C_1^\tp C_1)+\trace(X_{\textup{del}}L D_{21}D_{21}^\tp L^\tp),\\
		\label{Cost:4}\Ddecdel &=\trace(Y_{\textup{cen}}C_1^\tp C_1)+\trace(X_{\textup{dec,del}}L D_{21}D_{21}^\tp L^\tp),
	\end{align}
\end{subequations}
    respectively. If a feasible but sub-optimal $\Q$ is used in any of the above cases, write $\Q_{\Delta}\defeq\Q-\Q_{\textup{opt}}$. The cost of this sub-optimal $\Q$ is found by adding $\DQ\defeq\normm{\Tx_{12}\Q_{\Delta}D_{21}}_2^2$ to \eqref{Cost:1_2}--\eqref{Cost:4}. The various symbols are defined as 
    \begin{gather*}
    X_{\textup{dec}}\defeq \blkdiag(\{X^i(1,1)\}),\quad
    X_{\textup{del}}\defeq\blkdiag(\{\Xi_{c_{\tau}}^i(1,1)\}),\\
	X_{\textup{dec,del}}\defeq\blkdiag(\{\Xi_{\tau}^i(1,1)\}),\qquad \textit{and satisfy}
    \end{gather*}
\begin{subequations}
	\begin{align}
	\label{eq:Ineq:1}	\blkdiag(\{X_{\textup{cen}}(i,i)\})& \preceq X_{\textup{dec}}\preceq X_{\textup{dec,del}}, \\
	\label{eq:Ineq:2}	\blkdiag(\{X_{\textup{cen}}(i,i)\})& \preceq X_{\textup{del}}\preceq X_{\textup{dec,del}}.
	\end{align}
\end{subequations}
 $X_{\textup{cen}}, Y_{\textup{cen}}, F_{\textup{cen}}$, and $L$ are defined in \cref{App:ARE}. $\Xi_{\tau}^i$ and $\Xi_{c_{\tau}}^i$ are defined in \cref{App:F:DRE_1,App:F:DRE_2}, respectively.
\end{thm}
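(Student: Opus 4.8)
The plan is to evaluate the common objective $J = \normm{\Tx_{11} + \Tx_{12}\Q\Tx_{21}}_2^2$ of \eqref{opt2} at the optimal Youla parameter of \cref{thm:3,thm:5}, and to organize the computation as a double completion of squares that separates estimation from control. Since the closed-loop map—and hence $J$—is independent of the stabilizing choice $F_d$, I may take $F_d = F_{\textup{cen}}$; together with $L_d = L = \blkdiag(\{L^i\})$, the Riccati assumptions then make $\Tx_{12}$ inner (so $\Tx_{12}^\sim\Tx_{12}$ is constant) and $\Tx_{21}$ co-inner (so $\Tx_{21}\Tx_{21}^\sim$ is constant), as can be read off from the realization \eqref{T}. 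Factoring out the co-inner $\Tx_{21}$ and using orthogonality in $\Htwo$ peels off a $\Q$-independent term equal to $\trace(Y_{\textup{cen}}C_1^\tp C_1)$ and reduces the disturbance channel to the innovation feedthrough $D_{21}$. Because $Y_{\textup{cen}} = \blkdiag(\{Y^i\})$, this estimation floor uses only the already-decentralized filtering solution and is therefore insensitive to both the sparsity pattern and the delay; this is why it appears unchanged in all of \eqref{Cost:1_2}--\eqref{Cost:4}.

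Next I would factor out the inner $\Tx_{12}$ and complete the square with the control Riccati $X_{\textup{cen}}$, writing the remaining term as $\trace(X_{\textup{cen}}LD_{21}D_{21}^\tp L^\tp) + \normm{\Tx_{12}\Q_\Delta D_{21}}_2^2$, where $\Q_\Delta \defeq \Q - \Qopt$ and the cross term vanishes by orthogonality of the projection realized by $\Qopt$. The unconstrained minimizer gives the centralized value $\Dcen$ of \eqref{Cost:1_2}, while the residual simultaneously yields the suboptimality formula $\DQ = \normm{\Tx_{12}\Q_\Delta D_{21}}_2^2$ and the optimality of $\Qopt$. For the three structured cases I would substitute the explicit structured optimum of \cref{thm:3} ($\tau=0$) and \cref{thm:5} ($\tau>0$). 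The crucial structural fact is that this optimum induces a per-agent separation, so its contribution to the control term splits into a sum of per-agent $\Htwo$ norms, each driven by the block-diagonal innovation intensity $LD_{21}D_{21}^\tp L^\tp$ and weighted by the solution of the subsystem problem \eqref{Pi}. Reading off these weights produces the $(1,1)$ blocks of the subsystem control AREs \eqref{eq:ARE_decen_nodel:1} in the delay-free case, and of the loop-shifted differential Riccati equations of \cref{App:F:DRE_1,App:F:DRE_2} in the delayed case; assembling them block-diagonally gives $X_{\textup{dec}}$, $X_{\textup{del}}$, $X_{\textup{dec,del}}$ and hence \eqref{Cost:2}--\eqref{Cost:4}.

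Finally I would establish the ordering \eqref{eq:Ineq:1}--\eqref{eq:Ineq:2}, whose two-chain shape reflects that the four control problems are nested by constraint: the centralized no-delay problem is least constrained, the decentralized-with-delay problem is most constrained, and the decentralized-only and delay-only problems are intermediate but mutually incomparable. The two information-restriction bounds $\blkdiag(\{X_{\textup{cen}}(i,i)\}) \preceq X_{\textup{dec}}$ and $X_{\textup{del}} \preceq X_{\textup{dec,del}}$ I would obtain from a comparison argument in which the subsystem problem \eqref{Pi} is the more cooperative problem subject to the extra constraint that only a subset of inputs is used to regulate the descendants of Agent $i$; the constrained value function dominates, and extracting the agent-$i$ diagonal block yields the inequality (algebraically for the ARE, and pointwise along the flow in the delayed case). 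The two delay-addition bounds $X_{\textup{dec}} \preceq X_{\textup{dec,del}}$ and $\blkdiag(\{X_{\textup{cen}}(i,i)\}) \preceq X_{\textup{del}}$ I would obtain by comparing the stationary subsystem ARE against the loop-shifted differential Riccati equation on $[0,\tau]$, showing that the shifted data $(\tilde B_2,\tilde C_1)$ produces a pointwise-larger right-hand side and invoking monotone dependence of the Riccati flow on its data. I expect this last comparison to be the main obstacle: unlike the purely algebraic information-restriction bounds, it is a genuinely dynamic comparison between an ARE and a differential Riccati equation, so I would need to verify the monotonicity of the Riccati flow with respect to the loop-shifting modification and to align the terminal conditions of the differential equations with the stationary solutions so that both chains of \eqref{eq:Ineq:1}--\eqref{eq:Ineq:2} close consistently.
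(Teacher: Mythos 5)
Your route is essentially the paper's: reduce to the centralized two-term LQG decomposition (the $\trace(Y_{\textup{cen}}C_1^\tp C_1)$ estimation floor plus a control term), evaluate the control term at the structured optimum via per-agent Gramian/Lyapunov identities, get $\DQ$ from the normal-equation orthogonality, and prove the orderings by splitting the four inequalities into two ``remove links'' comparisons (Lyapunov equations for $X^i - X_{\textup{cen}_{\underline{ii}}}$ and its delayed analogue) and two ``add delay'' comparisons (ARE vs.\ loop-shifted DRE). One step needs repair, though: you cannot set $F_d = F_{\textup{cen}}$ inside the structured parameterization, because \cref{lem:stabilizing_controllers} requires $F_d$ block-diagonal for $\K\in\Stau \iff \Q\in\Stau$ to hold, and $F_{\textup{cen}}$ is dense in general. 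Consequently the $\Qopt$ of \cref{thm:3,thm:5} lives in the block-diagonal-$F_d$ parameterization and cannot be substituted directly into a completion of squares built around $F_{\textup{cen}}$. The paper resolves this by keeping the two parameterizations separate: it treats the structured $\K_\textup{opt}$ as a suboptimal \emph{centralized} controller, explicitly computes its centralized Youla parameter $\Q_{\textup{you}} = \F_u(\J^{-1},\K_\textup{opt})$ with $F_{\textup{cen}}$ in $\J$, and only then evaluates $\normm{D_{12}\Q_{\textup{you}}D_{21}}_2^2$ via the controllability Gramian and \cref{lem:X-X_cen}; your plan needs this intermediate computation to make ``reading off the weights'' rigorous. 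Also note that the DRE-vs-ARE monotonicity you flag as the main obstacle is not proved from scratch in the paper --- it is imported from Mirkin et al.\ (the $\Xi_\tau^i - X^i \succeq 0$ and $\Xi_{c_\tau}^i - X^c_{\textup{cen}_{\underline{ii}}}\succeq 0$ facts), with only the cross-comparison $\Xi_\tau^i \succeq \Xi^i_{c_{\underline{ii}}}$ handled by a new differential Lyapunov argument (\cref{lem:Xi_decdel-Xi_cendel}).
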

\begin{proof}
	See \cref{App:F}.
\end{proof}
In \eqref{Cost:1_2} we recognize $\Dcen$ as the standard LQG cost (fully connected graph with no delays). Further, there are two intuitive interpretations for \cref{thm:7} that are represented in \cref{fig:cost_interp} 
 for a 3-agents system. The intermediate graph topologies are different, but the starting and ending topologies are equal for both. Along the upper path, $\Ddecen-\Dcen$ is the additional cost incurred due to decentralization alone, and $\Ddecdel-\Ddecen$ is the further additional cost due to delays. Likewise, along the lower path, $\Dcendel-\Dcen$ is the additional cost due to delays alone and $\Ddecdel-\Dcendel$ is the further additional cost due to decentralization.  Finally, $\DQ$ is the additional cost incurred due to suboptimality. \cref{thm:7} unifies existing cost decomposition results for the centralized \cite[\S14.6]{ZDG}, decentralized \cite[Thm.~16]{lessard2015optimal}, and delayed \cite[Prop.~6]{mirkin2012h2} cases.

\begin{rem}
	Delay and decentralization do not contribute independently to the cost. Specifically, the marginal increase in cost due to adding processing delays depends on the graph topology. Likewise, the marginal increase in cost due to removing communication links depends on the processing delay. In other words, $\Dcen+\Ddecdel \neq \Ddecen+\Dcendel$.
\end{rem}
\if\MODE3
\begin{rem}
	There is a dual expression for the cost $\Dcen$ in~\eqref{Cost:1_2}:
	\[
		\Dcen= \trace(X_{\textup{cen}}B_1B_1^\tp)+\trace(Y_{\textup{cen}}F_{\textup{cen}}^\tp D_{12}^\tp D_{12}F_{\textup{cen}}).
	\]
	The corresponding dual expressions for \eqref{Cost:2}--\eqref{Cost:4} are unfortunately more complicated. See \cref{App:Cost_alt} for details.
\end{rem}
\else
\begin{rem}
	There is a dual expression for the cost $\Dcen$ in~\eqref{Cost:1_2}:
	$\Dcen= \trace(X_{\textup{cen}}B_1B_1^\tp)+\trace(Y_{\textup{cen}}F_{\textup{cen}}^\tp D_{12}^\tp D_{12}F_{\textup{cen}})$. The corresponding dual expressions for \eqref{Cost:2}--\eqref{Cost:4} are unfortunately more complicated. See \cref{App:Cost_alt} for details.
\end{rem}
\fi

\subsection{Synchronization example}
\blue{We demonstrate \cref{thm:3} via a simple structured LQG example. We consider $N=4$ identical lightly damped oscillators. The oscillators begin with different initial conditions and the goal is to achieve synchronization. The oscillators} \blue{have identical dynamics defined by the differential equations in \cref{fig:synchronization_openloop,fig:synchronization_example}. \cref{fig:synchronization_openloop} shows the open-loop zero-input response for the four oscillators with given initial conditions. Due to the light damping, the states slowly converge to zero as $t \rightarrow \infty$.}

\blue{\cref{fig:synchronization_example} shows the closed-loop response using the optimal controller from \cref{thm:3} for a diamond-shaped communication network with no processing delay. The controller states are initialized to match the initial state of the plant. Since the observer is an unbiased estimator, the LQG controller replicates the behavior of full-state feedback LQR. \cref{fig:synchronization_example} shows the four oscillators leveraging their shared information to achieve synchronization to a common oscillation pattern.}

In \cref{fig:cost_vs_delays_sparsity}, we use the same system as in \cref{fig:synchronization_example}, but we plot the total average cost as a function of time delay for various network topologies. The highest cost corresponds to a fully disconnected network, while the lowest cost corresponds to a fully connected network. In the limit as $\tau \to\infty$ (infinite processing delay), the cost tends \blue{to that of the fully disconnected case.}

\section{Conclusion}
\label{sec:conclufuture}
We studied a structured optimal control problem where multiple dynamically decoupled agents communicate over a delay network.
Specifically, we characterized the structure and efficient implementation of optimal controllers at the individual agent level.
We now propose some possible future applications for our work.

First, our approach can be readily generalized to treat cases with a combination of processing delays and network latency, where the various delays are heterogeneous but known \blue{\cite{kashyap2023thesis}}.
Next, the observer-regulator architecture elucidated in \cref{fig:agent-level-opt_v2} could also be used to develop heuristics for solving cooperative control problems where the agents' dynamics are nonlinear or the noise distributions are non-Gaussian. Examples could include decentralized versions of the \blue{Extended Kalman Filter or Unscented Kalman Filter.}
Finally, our closed-form expressions for the optimal cost can serve as lower bounds to the cost of practical implementation that have additional memory, power, or bandwidth limitations.

\newpage
\appendix

\if\MODE3
	\section*{Appendices}
	\renewcommand{\thesubsection}{\Alph{subsection}}
\fi

\subsection{Definition of the \texorpdfstring{$\Gamma$}{Gamma} function}\label{sec:appendix_gamma}

The $\Gamma$ function takes in a four-block plant $\Pp$ and adobe delay matrix $\Lambda_m^i$ and returns a transformed plant $\tilde\Pp$ and FIR systems $\Pi_u$, $\Pi_b$. As in \cite{mirkin2012h2}, we first consider the special case where $D_{12}^\tp D_{12} = I$.
The completion operator $\pi_\tau\{\cdot\}$ acts on a rational LTI system delayed by $\tau$ and returns the unique FIR system supported on $[0,\tau]$ that provides a rational completion:
\[
\squeezemat{2pt}
\pi_\tau\Biggl\{\left[\begin{array}{c|c}A&B\\ \hlinet C&0\end{array}\right]\!e^{-s\tau}\Biggr\}
\defeq\left[\begin{array}{c|c}A&e^{-A\tau}\!B\\ \hlinet C&0\end{array}\right]-\left[\begin{array}{c|c}A&B\\ \hlinet C&0\end{array}\right]e^{-s\tau}\!.
\]
The input matrices $B_2$ and $D_{12}$ of $\Pp$ are partitioned according to the blocks of adobe delay matrix $\Lambda_m^i$. So, ${B}_2=\bmat{ B_{2_0} & B_{2_{\tau}} }$, where the two blocks correspond to inputs with delay $0$ and $\tau$, respectively. $D_{12}$ is partitioned in a similar manner.
Define the Hamiltonian matrix
\[
\squeezemat{3pt}
H = \bmat{H_{11} & H_{12} \\ H_{21} & H_{22}} \defeq \bmat{ A \!-\! B_{2_0} D_{12_0}^\tp C_1 & -B_{2_0}B_{2_0}^\tp \\
-C_1^\tp P_\tau C_1 & -A^\tp \!+\! C_1^\tp D_{12_0} B_{2_0}^\tp}\!,
\]
where $P_0 \defeq D_{{12}_0}D_{{12}_0}^\tp$ and $P_{\tau} \defeq I-P_0$, 
and define its matrix exponential as $\Sigma \defeq e^{H\tau}$.
Define the modified matrices
\begin{align*}
	\tilde{B}_2 &\defeq
	\bmat{B_{2_0} & \Sigma_{12}^\tp C_{1}^\tp D_{{12}_{\tau}}+\Sigma_{22}^\tp B_{2_{\tau}}}\\
	\tilde{C}_1 &\defeq \left(P_{\tau} C_{1} + P_0 C_{1} \Sigma_{22}^\tp-D_{{12}_{0}} B_{2_0}^\tp \Sigma_{21}^\tp \right)\Sigma_{22}^{-\tp},	
\end{align*}
where the $\Sigma_{ij}$ are partitioned the same way as the $H_{ij}$. The modified four-block plant output by $\Gamma$ is then
\[
	\tilde{\Pp} \defeq
	\bmat{ \tilde{\Pp}_{11} & \tilde{\Pp}_{12} \\ \Pw & \tilde{\Pp}_{22} } \defeq
	\left[\begin{array}{c|cc}
		A & B_1 & \tilde{B}_2\\ \hlinet
		\tilde{C}_1 & 0 & D_{12} \\
		C_2 & D_{21} & 0 \end{array}\right],
\]
Finally, define the FIR systems
\[
	\bmat{\tilde{\Pi}_u\\ \tilde{\Pi}_b} \defeq
	\pi_{\tau}\!
	\left\{ \left[\begin{array}{cc|c}
		H_{11} & H_{12} & B_{2_{\tau}}\\
		H_{21} & H_{22} & -C_1^\tp D_{12_{\tau}}\\ \hlinet
		D_{12_0}^\tp{C}_1 & B_{2_0} & 0 \\
		C_2 & 0 & 0\end{array}\right]e^{-s\tau}     
	\right\}.
\]FIR outputs of $\Gamma$ are $\Pi_{u} \defeq \bmat{I& \tilde{\Pi}_u\\0&I}$ and $\Pi_b \defeq \bmat{0& \tilde{\Pi}_b}$.

In the general case $D_{12}^\tp D_{12}\neq I$, we can use a standard change of variables to transform back to the case $D_{12}^\tp D_{12} = I$. See \cite[Rem.~2]{mirkin2009loop} for details.

\subsection{Gramian equations}\label{App:Gramian}
	Here we provide the set of Lyapunov equations that are uniquely associated with the multi-agent problem. 
	\begin{lem}\label{lem:X-X_cen}
		Suppose $(X_{\textup{cen}},F_{\textup{cen}})$ and $(X^i,F^i)$ are defined in \eqref{eq:ARE_cen:1} and~\eqref{eq:ARE_decen_nodel:1} respectively. Then $W_X^i\defeq X^i-X_{\textup{cen}_{\underline{ii}}} \succeq 0$ is the unique solution to the Lyapunov equation
		\if\MODE1
		\begin{multline}\label{eq:W_X}
			\hspace{-1mm}(A_{\underline{ii}}+B_{2_{\underline{ii}}}F^i)^\tp W_X^i + W_X^i (A_{\underline{ii}}+B_{2_{\underline{ii}}}F^i)\\ 
			+(E_{m_{\underline{i}}}F^i-F_{\textup{cen}}E_{n_{\underline{i}}})^\tp D_{12}^\tp D_{12} (E_{m_{\underline{i}}}F^i-F_{\textup{cen}}E_{n_{\underline{i}}})=0.
		\end{multline}
		\else
		\begin{equation}\label{eq:W_X}
			(A_{\underline{ii}}+B_{2_{\underline{ii}}}F^i)^\tp W_X^i + W_X^i (A_{\underline{ii}}+B_{2_{\underline{ii}}}F^i)
			+(E_{m_{\underline{i}}}F^i-F_{\textup{cen}}E_{n_{\underline{i}}})^\tp D_{12}^\tp D_{12} (E_{m_{\underline{i}}}F^i-F_{\textup{cen}}E_{n_{\underline{i}}})=0.
		\end{equation}
		\fi 
	\end{lem}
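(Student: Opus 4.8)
The plan is to rewrite both Riccati equations in their equivalent closed-loop Lyapunov form, project the centralized equation onto the coordinates of the descendants $\underline{i}$, and then subtract the two so that the difference $W_X^i = X^i - X_{\textup{cen}_{\underline{ii}}}$ appears on the left. The starting point is the standard completion-of-squares rewriting: if $(X,F)=\ric(A,B,C,D)$, then for \emph{every} gain $\hat F$,
\[
(A + B\hat F)^\tp X + X(A + B\hat F) + (C + D\hat F)^\tp(C + D\hat F) = (\hat F - F)^\tp D^\tp D\, (\hat F - F);
\]
taking $\hat F = F$ recovers the ARE in the Hurwitz-stable form $(A+BF)^\tp X + X(A+BF) + (C+DF)^\tp(C+DF) = 0$. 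I would apply this to both $(X_{\textup{cen}},F_{\textup{cen}})$ from \eqref{eq:ARE_cen:1} and $(X^i,F^i)$ from \eqref{eq:ARE_decen_nodel:1}.

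Next I would exploit the dynamically decoupled (block-diagonal) structure of $A$ and $B_2$. Writing $E_{n_{\underline{i}}}$, $E_{m_{\underline{i}}}$ for the descendant block-column selectors (so $E_{n_{\underline{i}}}^\tp E_{n_{\underline{i}}} = I$ and $X_{\textup{cen}_{\underline{ii}}} = E_{n_{\underline{i}}}^\tp X_{\textup{cen}} E_{n_{\underline{i}}}$), block-diagonality yields the intertwining identities $A E_{n_{\underline{i}}} = E_{n_{\underline{i}}} A_{\underline{ii}}$ and $B_2 E_{m_{\underline{i}}} = E_{n_{\underline{i}}} B_{2_{\underline{ii}}}$, together with $C_1 E_{n_{\underline{i}}} = C_{1_{:\underline{i}}}$ and $D_{12} E_{m_{\underline{i}}} = D_{12_{:\underline{i}}}$. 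Introducing the zero-padded gain $\bar F^i \defeq E_{m_{\underline{i}}} F^i E_{n_{\underline{i}}}^\tp$, these identities combine to give $E_{n_{\underline{i}}}(A_{\underline{ii}} + B_{2_{\underline{ii}}} F^i) = (A + B_2 \bar F^i) E_{n_{\underline{i}}}$, which is exactly what lets me push the decentralized closed-loop matrix $A_{\underline{ii}} + B_{2_{\underline{ii}}}F^i$ through the projection $E_{n_{\underline{i}}}^\tp X_{\textup{cen}} E_{n_{\underline{i}}}$.

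I would then apply the completion-of-squares identity for the centralized data at $\hat F = \bar F^i$ and sandwich the result between $E_{n_{\underline{i}}}^\tp(\cdot)E_{n_{\underline{i}}}$. The intertwining identities turn the left side into $(A_{\underline{ii}} + B_{2_{\underline{ii}}} F^i)^\tp X_{\textup{cen}_{\underline{ii}}} + X_{\textup{cen}_{\underline{ii}}}(A_{\underline{ii}} + B_{2_{\underline{ii}}} F^i)$, while $C_1 E_{n_{\underline{i}}} = C_{1_{:\underline{i}}}$ (and the analogous $D_{12}$ identity) collapses the quadratic output term to $(C_{1_{:\underline{i}}} + D_{12_{:\underline{i}}} F^i)^\tp(\cdot)$ and the remainder term to $(E_{m_{\underline{i}}} F^i - F_{\textup{cen}} E_{n_{\underline{i}}})^\tp D_{12}^\tp D_{12}(E_{m_{\underline{i}}} F^i - F_{\textup{cen}} E_{n_{\underline{i}}})$. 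Subtracting this from the closed-loop Lyapunov form of the decentralized ARE \eqref{eq:ARE_decen_nodel:1}, the common $(C_{1_{:\underline{i}}} + D_{12_{:\underline{i}}} F^i)^\tp(\cdot)$ terms cancel, leaving precisely \eqref{eq:W_X} for $W_X^i$.

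Finally, uniqueness and positive semidefiniteness are immediate: since $A_{\underline{ii}} + B_{2_{\underline{ii}}} F^i$ is Hurwitz (by definition of $\ric$) and $D_{12}^\tp D_{12}\succ 0$ under \cref{Ass:System}(R1), the constant term in \eqref{eq:W_X} is positive semidefinite, so the Lyapunov equation has the unique solution $W_X^i = \int_0^\infty e^{(A_{\underline{ii}}+B_{2_{\underline{ii}}}F^i)^\tp t}\,(\cdot)\,e^{(A_{\underline{ii}}+B_{2_{\underline{ii}}}F^i) t}\,\mathrm{d}t \succeq 0$, which also recovers the claimed $W_X^i = X^i - X_{\textup{cen}_{\underline{ii}}} \succeq 0$. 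I expect the main obstacle to be the selection-matrix and block-index bookkeeping needed to verify the intertwining identities, which is exactly where the dynamically decoupled structure is essential; once those identities are established, the rest of the algebra closes automatically.
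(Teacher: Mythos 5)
Your proposal is correct and follows essentially the same route as the paper: project the centralized ARE onto the descendant coordinates via $E_{n_{\underline{i}}}^\tp(\cdot)E_{n_{\underline{i}}}$, subtract it from the decentralized ARE, and conclude positive semidefiniteness and uniqueness from the Hurwitz property of $A_{\underline{ii}}+B_{2_{\underline{ii}}}F^i$. The completion-of-squares identity at the gain $\hat F=E_{m_{\underline{i}}}F^iE_{n_{\underline{i}}}^\tp$ and the intertwining relations you spell out are precisely the ``algebraic manipulation'' the paper leaves implicit, so this is a faithful (and more explicit) version of the paper's argument.
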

	\begin{proof}
		Left and right multiply the ARE for \eqref{eq:ARE_cen:1} by $E_{n_{\underline{i}}}^\tp$ and $E_{n_{\underline{i}}}$ respectively, and subtract it from~\eqref{eq:ARE_decen_nodel:1}. The result follows from algebraic manipulation and applying the definitions of $F^i$ and $F_{\textup{cen}}$.
		Since the final term in \eqref{eq:W_X} is positive semidefinite and $A_{\underline{ii}}+B_{2_{\underline{ii}}}F^i$ is Hurwitz, it follows that $W_X^i\defeq X^i-X_{\textup{cen}_{\underline{ii}}} \succeq 0$ and is unique. 
	\end{proof}

	We also have a dual analog to \cref{lem:X-X_cen}, provided below.
	\begin{lem}\label{lem:W_Y}
		Consider the setting of \cref{lem:X-X_cen}. There exists a unique $W_Y^i \succeq 0$ that satisfies the Lyapunov equation
		\if\MODE1
		\begin{multline}\label{eq:W_Y}
			(A_{\underline{ii}}+B_{2_{\underline{ii}}}F^i) W_Y^i + W_Y^i (A_{\underline{ii}}+B_{2_{\underline{ii}}}F^i)^\tp\\ 
			+E_{n_{\underline{i}}}^\tp\bar{L}\bar{\1}_p D_{21}D_{21}^\tp\bar{\1}_p^\tp\bar{L}^\tp E_{n_{\underline{i}}}=0.
		\end{multline}
		\else
		\begin{equation}\label{eq:W_Y}
			(A_{\underline{ii}}+B_{2_{\underline{ii}}}F^i) W_Y^i + W_Y^i (A_{\underline{ii}}+B_{2_{\underline{ii}}}F^i)^\tp 
			+E_{n_{\underline{i}}}^\tp\bar{L}\bar{\1}_p D_{21}D_{21}^\tp\bar{\1}_p^\tp\bar{L}^\tp E_{n_{\underline{i}}}=0.
		\end{equation}
		\fi
	\end{lem}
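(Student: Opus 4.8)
The plan is to recognize \eqref{eq:W_Y} as a standard reachability-type Lyapunov equation $MW+WM^\tp+Q=0$ and appeal to the classical existence-uniqueness result, rather than deriving $W_Y^i$ as a difference of Riccati solutions the way $W_X^i$ was obtained in \cref{lem:X-X_cen}. It is worth flagging at the outset that \eqref{eq:W_Y} is genuinely the dual of \eqref{eq:W_X}: the same closed-loop matrix $A_{\underline{ii}}+B_{2_{\underline{ii}}}F^i$ governs both, but with the transpose on the opposite factor, so \eqref{eq:W_X} is an observability Gramian equation whereas \eqref{eq:W_Y} is the corresponding reachability Gramian equation.

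First I would verify the two hypotheses of the classical result. The coefficient matrix $M\defeq A_{\underline{ii}}+B_{2_{\underline{ii}}}F^i$ is Hurwitz; this is precisely the stabilizing property built into the definition of $\ric$ for \eqref{eq:ARE_decen_nodel:1}, and it was already invoked at the end of the proof of \cref{lem:X-X_cen}, so I would reuse it verbatim. The forcing term is positive semidefinite because it factors as $GG^\tp$ with $G\defeq E_{n_{\underline{i}}}^\tp\bar{L}\bar{\1}_pD_{21}$; no finer structure of $\bar{L}$ or $\bar{\1}_p$ is needed.

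With $M$ Hurwitz and $Q=GG^\tp\succeq0$, the unique solution is given in closed form by $W_Y^i=\int_0^\infty e^{Mt}\,GG^\tp\,e^{M^\tp t}\,\mathrm{d}t$, which is manifestly symmetric and positive semidefinite; uniqueness follows because a Hurwitz $M$ renders the Lyapunov operator $W\mapsto MW+WM^\tp$ invertible. Thus existence, uniqueness, and $W_Y^i\succeq0$ all drop out of a single application of the standard theory.

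The main obstacle here is essentially presentational rather than mathematical. Unlike \cref{lem:X-X_cen}, this dual statement does not come with a clean interpretation of $W_Y^i$ as a difference of two Riccati solutions, since the block-diagonal estimation structure gives $Y^i=Y_{\textup{cen}_{ii}}$ and the naive dual difference therefore vanishes. Consequently the only things to be careful about are matching the placement of the transposes to the reachability-Gramian orientation and confirming that the Hurwitz matrix driving \eqref{eq:W_Y} is the control closed-loop matrix $A_{\underline{ii}}+B_{2_{\underline{ii}}}F^i$ itself rather than its transpose.
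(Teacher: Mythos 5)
Your proposal is correct and takes essentially the same approach as the paper: the paper's proof likewise consists of observing that the forcing term $E_{n_{\underline{i}}}^\tp\bar{L}\bar{\1}_p D_{21}D_{21}^\tp\bar{\1}_p^\tp\bar{L}^\tp E_{n_{\underline{i}}}$ is positive semidefinite and that $A_{\underline{ii}}+B_{2_{\underline{ii}}}F^i$ is Hurwitz (from the stabilizing property of the Riccati solution in \eqref{eq:ARE_decen_nodel:1}), and then invoking the standard existence, uniqueness, and positive-semidefiniteness result for Lyapunov equations. Your added remarks about the integral representation and the reachability-Gramian orientation are consistent with, though not needed beyond, the paper's one-line argument.
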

	\begin{proof}
		Since $E_{n_{\underline{i}}}^\tp\bar{L}\bar{\1}_p D_{21}D_{21}^\tp\bar{\1}_p^\tp\bar{L}^\tp E_{n_{\underline{i}}}\succeq 0$ and the matrix $A_{\underline{ii}}+B_{2_{\underline{ii}}}F^i$ is Hurwitz, $W_Y^i \succeq 0$ and is unique. 
\end{proof}

\subsection{Proof of \texorpdfstring{\cref{thm:3}}{Theorem 7}}\label{App:B}

For the case $\tau=0$, 
\blue{we can replace $\Q\in\Hinf\cap \Stau$ by $\Q\in\Htwo\cap\Hinf\cap \Ss_0$ in \eqref{opt2} because the closed-loop map must be strictly proper in order to have a finite $\Htwo$ norm. Since $\Tx_{11}$ is strictly proper, this forces $\Q$ to be strictly proper as well, and hence $\Q\in\Htwo\cap\Hinf$. Further, if $\Q$ is rational, we have $\Q\in\RHtwo$.}
The optimization problem~\eqref{opt2} is a least squares problem with a subspace constraint, so the necessary and sufficient conditions for optimality are given by the normal equations
$\Tx_{12}^\sim ( \Tx_{11} + \Tx_{12} \Q \Tx_{21} ) \Tx_{21}^\sim \in \left( \blue{\RHtwo} \cap \Ss_0 \right)^\perp$ with the constraint that  
$\Q\in\blue{\RHtwo} \cap \Ss_0$.

We can check membership $\F \in (\blue{\RHtwo}\cap\Ss_0)^\perp$ by checking if $\F_{ij} \in \blue{\RHtwo^\perp}$ whenever there is a path $j\to i$. For example, consider the two-node graph $1 \to 2$. Then we have
\if\MODE3
\begin{equation*}
	\blue{\RHtwo} \cap \Ss_0 = \bmat{\blue{\RHtwo} & 0 \\ \blue{\RHtwo} & \blue{\RHtwo}}
	\quad\text{and}\quad
	(\blue{\RHtwo} \cap \Ss_0)^\perp = \bmat{\blue{\RHtwo^\perp} & \Ltwo \\ \blue{\RHtwo^\perp} & \blue{\RHtwo^\perp}}.
	\end{equation*}
\else
\begin{align*}
\blue{\RHtwo} \cap \Ss_0 &= \bmat{\blue{\RHtwo} & 0 \\ \blue{\RHtwo} & \blue{\RHtwo}}
\quad\text{and}\quad\\
(\blue{\RHtwo} \cap \Ss_0)^\perp &= \bmat{\blue{\RHtwo^\perp} & \Ltwo \\ \blue{\RHtwo^\perp} & \blue{\RHtwo^\perp}}.
\end{align*}
\fi
So here, $\F \in (\blue{\RHtwo}\cap\Ss_0)^\perp$ if and only if $\F_{11},\F_{21},\F_{22} \in\blue{\RHtwo^\perp}$.
We will show that the proposed $\Q_\textup{opt}$ in \eqref{Qopt1} is optimal by directly verifying the normal equations.

Substituting $\Q_\textup{opt}$ from \eqref{Qopt1} into $\Tx_{11}+\Tx_{12}\Qopt\Tx_{21}$ with $\Tx_{ij}$ defined in \eqref{T}, we obtain the closed-loop map\looseness=-1
\if\MODE1
\begin{multline}\label{eq:cl_map}
	\Tx_{11}+\Tx_{12}\Qopt\Tx_{21} = \left[\begin{array}{c|c}%
		A_{\textup{cl}}&B_{\textup{cl}}\\\hlinet
		C_{\textup{cl}}&0 \end{array}\right] \\
	\defeq\left[\begin{array}{cc|c}%
			\bar{A}+\bar{B}\bar{F}&-\bar{L}\bar{C}\bar{\1}_n&-\bar{L}\bar{\1}_pD_{21}\\
			0&\blue{A_L}&\blue{B_L}\\\hlinet
			C_1\bar{\1}_n^\tp+D_{12}\bar{\1}_m^\tp\bar{F}&C_{1}&0\end{array}\right],
\end{multline}
\else
\begin{equation}\label{eq:cl_map}
	\Tx_{11}+\Tx_{12}\Qopt\Tx_{21} = \left[\begin{array}{c|c}%
		A_{\textup{cl}}&B_{\textup{cl}}\\\hlinet
		C_{\textup{cl}}&0 \end{array}\right]
	\defeq\left[\begin{array}{cc|c}%
			\bar{A}+\bar{B}\bar{F}&-\bar{L}\bar{C}\bar{\1}_n&-\bar{L}\bar{\1}_pD_{21}\\
			0&\blue{A_L}&\blue{B_L}\\\hlinet
			C_1\bar{\1}_n^\tp+D_{12}\bar{\1}_m^\tp\bar{F}&C_{1}&0\end{array}\right],
\end{equation}
\fi
\blue{where $A_{L}\defeq A+LC_2$ and $B_{L}\defeq B_1+LD_{21}$.}
Next, we show that the controllability Gramian for the closed loop map is block-diagonal.
\begin{lem}\label{lem:clmap+gram}
The controllability Gramian for the closed-loop map \eqref{eq:cl_map} is given by
\if\MODE3
\[\Theta\defeq \blkdiag(\{E_{n_{\underline{i}}}W_Y^i E_{n_{\underline{i}}}^\tp\}_{i\in[N]},Y_{\textup{cen}}),\] 
\else
$\Theta\defeq \blkdiag(\{E_{n_{\underline{i}}}W_Y^i E_{n_{\underline{i}}}^\tp\}_{i\in[N]},Y_{\textup{cen}})$,
\fi
where $Y_{\textup{cen}}$ and $W_Y^i$ are defined in \cref{eq:ARE_cen:2,lem:W_Y}, respectively. In other words, $\Theta \succeq 0$ is the unique solution to 
$A_\textup{cl}\Theta+\Theta A_\textup{cl}^\tp+B_\textup{cl} B_\textup{cl}^\tp=0$.
\end{lem}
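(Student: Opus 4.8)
The plan is to exploit the block upper-triangular structure of $A_\textup{cl}$ in \eqref{eq:cl_map} together with the block-diagonal structure of the candidate Gramian, and simply verify the Lyapunov equation $A_\textup{cl}\Theta + \Theta A_\textup{cl}^\tp + B_\textup{cl}B_\textup{cl}^\tp = 0$ one block at a time. Write $\Theta = \blkdiag(\Theta_1, Y_\textup{cen})$, where $\Theta_1 \defeq \blkdiag(\{E_{n_{\underline i}}W_Y^i E_{n_{\underline i}}^\tp\})$ is the $Nn \times Nn$ leading part and $Y_\textup{cen}$ is the trailing $n\times n$ part, conformally with the two state groups of \eqref{eq:cl_map}. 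Before verifying the equation I would first argue that $\Theta$ is genuinely the controllability Gramian: the image of $B_\textup{cl}$ is confined to the descendant coordinates of each leading block (because $\bar L\bar\1_p D_{21}$ injects only into the node-$i$ states) together with the trailing block, and this subspace is $A_\textup{cl}$-invariant and is the one on which $A_\textup{cl}$ acts as the Hurwitz maps $A_{\underline{ii}} + B_{2_{\underline{ii}}}F^i$ and $A + LC_2$. Hence the Gramian integral converges and $\Theta$ is the unique solution supported on this reachable subspace; the remaining (non-descendant, possibly unstable) modes are unreachable and contribute zero to both $\Theta$ and $B_\textup{cl}$.

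For the trailing $(2,2)$ block, the equation reduces to $(A+LC_2)Y_\textup{cen} + Y_\textup{cen}(A+LC_2)^\tp + (B_1+LD_{21})(B_1+LD_{21})^\tp = 0$, which is exactly the Lyapunov form of the estimation ARE \eqref{eq:ARE_cen:2} with Kalman gain $L$; expanding and using $L D_{21}D_{21}^\tp = -(Y_\textup{cen}C_2^\tp + B_1 D_{21}^\tp)$ recovers the ARE, so this block holds. For the off-diagonal $(1,2)$ block, since $\Theta$ is block-diagonal the only surviving terms are $-\bar L\bar C\bar\1_n Y_\textup{cen} - \bar L\bar\1_p D_{21}(B_1+LD_{21})^\tp$; using $\bar C\bar\1_n = 1_N\otimes C_2$ and $\bar\1_p = 1_N\otimes I_p$, this factors as $-\bar L\bigl(1_N \otimes [C_2 Y_\textup{cen} + D_{21}(B_1+LD_{21})^\tp]\bigr)$, and the bracket vanishes by the same Kalman identity $C_2 Y_\textup{cen} + D_{21}B_1^\tp = -D_{21}D_{21}^\tp L^\tp$. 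Thus the $(1,2)$ and $(2,1)$ blocks vanish.

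The main obstacle is the leading $(1,1)$ block, which is where the per-agent Lyapunov equation \eqref{eq:W_Y} of \cref{lem:W_Y} must be reconstructed through the zero-padding bookkeeping. Here I would first note that the leading block of $A_\textup{cl}$ is block-diagonal with $i$-th block $A + E_{n_{\underline i}}B_{2_{\underline{ii}}}F^i E_{n_{\underline i}}^\tp$, and that the leading block of $B_\textup{cl}B_\textup{cl}^\tp$, namely $\bar L\bar\1_p D_{21}D_{21}^\tp\bar\1_p^\tp\bar L^\tp$, is also block-diagonal: its $(i,j)$ block contains the factor $E_{p_i}^\tp D_{21}D_{21}^\tp E_{p_j}$, which vanishes for $i\neq j$ because $D_{21}$ is block-diagonal. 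It then suffices to treat each diagonal block separately. Using that $A$, $B_2$ are block-diagonal gives the intertwining identities $A E_{n_{\underline i}} = E_{n_{\underline i}}A_{\underline{ii}}$ and $B_2 E_{m_{\underline i}} = E_{n_{\underline i}}B_{2_{\underline{ii}}}$, together with $E_{n_{\underline i}}^\tp E_{n_{\underline i}} = I$ and $E_{n_i} = E_{n_{\underline i}}(I_{n_{\underline i}})_{:1}$, so that the $i$-th diagonal block factors as $E_{n_{\underline i}}(\,\cdot\,)E_{n_{\underline i}}^\tp$; since $E_{n_{\underline i}}$ has full column rank, the block equation holds if and only if the inner factor vanishes, and that inner factor is precisely \eqref{eq:W_Y}. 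This confirms the $(1,1)$ block.

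Finally, positive semidefiniteness follows immediately: each $W_Y^i \succeq 0$ by \cref{lem:W_Y} and $Y_\textup{cen} \succeq 0$ as a stabilizing ARE solution, so $\Theta \succeq 0$; uniqueness was already secured by the reachable-subspace argument of the first paragraph. I expect the zero-padding manipulations in the $(1,1)$ block—matching the block-diagonal pieces of $A_\textup{cl}$, $\Theta_1$, and $B_\textup{cl}B_\textup{cl}^\tp$ to the single-agent equation \eqref{eq:W_Y}—to be the only delicate part; the remaining blocks are routine consequences of the centralized Kalman identities.
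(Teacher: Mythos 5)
Your proof is correct and takes essentially the same route as the paper's: direct substitution of the candidate $\Theta$ into the Lyapunov equation, verified block by block using the centralized estimation ARE \eqref{eq:ARE_cen:2} for the trailing and off-diagonal blocks and the per-agent equation \eqref{eq:W_Y} of \cref{lem:W_Y} for the leading block. Your reachable-subspace argument is in fact slightly more careful than the paper's terse assertion that $A_\textup{cl}$ is Hurwitz (which can fail when non-descendant modes $A_{\ell\ell}$ are unstable), but this is a refinement of the same verification rather than a different approach.
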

\begin{proof}
$A_\textup{cl}$ is Hurwitz and $B_\textup{cl} B_\textup{cl}^\tp \succeq 0$ so the Lyapunov equation has a unique solution and $\Theta \succeq 0$. We can verify the solution by direct substitution using \cref{lem:W_Y} and the ARE associated with \eqref{eq:ARE_cen:2}.
\end{proof}

\cref{lem:clmap+gram} has the following statistical interpretation. If the controlled system \eqref{eq:cl_map} is driven by standard Gaussian noise, its state in these coordinates will have a steady-state covariance $\Theta$, so each block component will be mutually independent.

\subsubsection{Proof of optimality}
Let $\Omega\defeq\Tx_{12}^\sim (\Tx_{11}+\Tx_{12}\Qopt\Tx_{21})\Tx_{21}^\sim$. Substituting $\Q_\textup{opt}$ from \eqref{Qopt1} and using \eqref{eq:cl_map}, we obtain
\begin{align}\label{eq:Omega}
	\Omega &={\left[\begin{array}{ccc|c}%
			-A_{K}^\tp&-C_{K}^\tp C_{\textup{cl}}&0&0\\
			0&A_{\textup{cl}}&B_{\textup{cl}}B_{L}^\tp&B_{\textup{cl}}D_{21}^\tp\\
			0&0&-A_{L}^\tp&-C_2^\tp\\\hlinet
			B_2^\tp&D_{12}^\tp C_{\textup{cl}}&0&0\end{array}\right]}, 
\end{align} where $A_{K}\defeq A+B_2F_d$, $C_{K}\defeq C_1+D_{12}F_d$, and $A_{\textup{cl}}$, $B_{\textup{cl}}$, $C_{\textup{cl}}$, are defined in~\eqref{eq:cl_map}. Apply the state transformation
\begin{align*}
	T = \bmat{
			I& \bmat{\bar{\1}_n^\tp\bar{X} & 0}&\bar{\1}_n^\tp\bar{X}\bar{W}\bar{\1}_p\\
			0&I&\Theta \bar{\1}_p\\
			0&0&I}
\end{align*}  
to \eqref{eq:Omega}, where we defined $\bar W\defeq\blkdiag(\{E_{n_{\underline{i}}}W_Y^iE_{n_{\underline{i}}}^\tp\}_{i \in [N]})$ and $\bar X \defeq \blkdiag(\{E_{n_{\underline{i}}}W_X^iE_{n_{\underline{i}}}^\tp+X_{\textup{cen}}\}_{i \in [N]})$,  and $\Theta$ is defined in \cref{lem:clmap+gram}. \blue{The transformed $\Omega$ is
\begin{align*}
	\Omega={\left[\begin{array}{cccc|c}%
			-A_{K}^\tp&{\star}_1&\star&\star&\star\\
			0&\bar{A}+\bar{B}\bar{F}&-\bar{L}\bar{C}\bar{\1}_n&{\star}_2&{\star}_3\\
			0&0&A_L&{\star}_5&{\star}_6\\
			0&0&0&-A_{L}^\tp&-C_2^\tp\\\hlinet
			B_2^\tp&{\star}_4&D_{12}^\tp C_1&\star&0\end{array}\right]},
\end{align*}
where we have defined the symbols
\begin{align*}
	{\star}_1 &\defeq-A_{K}^\tp\bar{\1}_n^\tp\bar{X}\!-\!C_{K}^\tp(C_1\bar{\1}_n^\tp\!+\!D_{12}\bar{\1}_m^\tp\bar{F})\!-\!\bar{\1}_n^\tp\bar{X}(\bar{A}\!+\!\bar{B}\bar{F}) \\
	{\star}_2 &\defeq-\bar{L}\bar{\1}_pD_{21}B_{L}^\tp\!-\!\bar{L}\bar{C}\bar{\1}_nY_{\textup{cen}}\!+\!(\bar{A}\!+\!\bar{B}\bar{F})\bar{W}\bar{\1}_p\!+\!\bar{W}\bar{\1}_pA_{L}^\tp \\
	{\star}_3&\defeq-\bar{L}\bar{\1}_pD_{21}^\tp+\bar{W}\bar{\1}_pC_2^\tp \\
	{\star}_4&\defeq D_{12}^\tp (C_1\bar{\1}_n^\tp+D_{12}\bar{\1}_m^\tp\bar{F})+B_2^\tp\bar{\1}_n^\tp\bar{X}\\
	{\star}_5&\defeq A_{L}Y_{\textup{cen}}+B_{L}B_{L}^\tp+Y_{\textup{cen}}A_{L}^\tp\\
	{\star}_6&\defeq B_{L}D_{21}^\tp+Y_{\textup{cen}}C_2^\tp.
\end{align*}
\blue{A $\star$ without subscript denotes an unimportant block.}  
Simplifying using Riccati and Lyapunov equations from \cref{App:ARE} and \cref{App:Gramian} respectively, we get ${\star}_5={\star}_6=0$; the mode $A_L$ is uncontrollable.} Removing it, we obtain 
\begin{align}
	\Omega={\left[\begin{array}{ccc|c}%
			-A_{K}^\tp&\star_1&\star&\star\\
			0&\bar{A}+\bar{B}\bar{F}&{\star}_2&{\star}_3\\
			0&0&-A_{L}^\tp&-C_2^\tp\\\hlinet
			B_2^\tp&{\star}_4&\star&0\end{array}\right]}.
\end{align}
Now consider a block $\Omega_{ij}$ for which there is a path $j\to i$. 
\begin{align}\label{omegij}
	\Omega_{ij} &={\left[\begin{array}{ccc|c}%
			-A_{K_{ii}}^\tp&{\star}_{1_{i:}}&\star&\star\\[1mm]
			0&\bar{A}+\bar{B}\bar{F}&{\star}_{2_{:j}}&{\star}_{3_{:j}}\\[1mm]
			0&0&-A_{L_{jj}}^\tp&-C_{2_{jj}}^\tp\\[1mm]\hlinet
			B_{2_{ii}}^\tp&{\star}_{4_{i:}}&\star&0\end{array}\right]}.
\end{align}
Let $\star_1^k$ and $\star_4^k$ denote the $k^\text{th}$ block column and let $\star_2^k$ and $\star_3^k$ denote the $k^\text{th}$ block row. Algebraic manipulation reveals that
\begin{enumerate}[(i)]
\item If $i\in \underline k$ and $\ell\in \underline k$, then $\star^k_{1_{i\ell}} =  \star^k_{4_{i\ell}} = 0$. \label{xi}
\item If $\ell\notin \underline k$ or $j\notin \underline k$, then $\star^k_{2_{\ell j}}= \star^k_{3_{\ell j}} = 0$. \label{xii}
\end{enumerate}
Consider the $k^\text{th}$ diagonal block of $\bar A + \bar B \bar F$ in \eqref{omegij}, which is $A + E_{n_{\underline k}}B_{2_{\underline k \underline k}} F^k E_{n_{\underline k}}^\tp$. This block is itself block-diagonal; it contains the block $A_{\underline k\underline k}+B_{2_{\underline k \underline k}} F^k$ and smaller blocks $A_{\ell \ell}$ for all $\ell \notin \underline k$. We have three cases.
\begin{enumerate}[1.]
\item If $k \in \bar i$, then for all $\ell \in \underline k$, we have $\star^k_{1_{i\ell}} =  \star^k_{4_{i\ell}} = 0$ from Item \eqref{xi} above, so the mode $A_{\underline k\underline k}+B_{2_{\underline k \underline k}} F^k$ is unobservable.

\item If $k\in\bar i$, but instead $\ell \notin \underline k$, we have $\star^k_{2_{\ell j}}= \star^k_{3_{\ell j}} = 0$ from Item \eqref{xii} above, so the modes $A_{\ell\ell}$ are uncontrollable.

\item If $k \notin \bar i$, then $k \notin \bar j$ because $j\to i$ by assumption. Then from Item \eqref{xii} above, all such modes are uncontrollable.
\end{enumerate}
Consequently every block of $\bar A + \bar B \bar F$ is either uncontrollable or unobservable, leading us to the reduced realization
\begin{align}
	\Omega_{ij} &={\left[\begin{array}{cc|c}%
			-A_{K_{ii}}^\tp&\star&\star\\
			0&-A_{L_{jj}}^\tp&-C_{2_{jj}}^\tp\\\hlinet
			B_{2_{ii}}^\tp&\star&0\end{array}\right]}.
\end{align}
Therefore, $\Omega_{ij} \in \blue{\RHtwo^\perp}$ whenever $j\to i$, as required.

\subsection{Proof of \texorpdfstring{\cref{thm:5}}{Theorem 11}}\label{App:D}

Start with the convexified optimization problem \eqref{opt2}. Based on the structured realization \eqref{T}, we see that $\Tx_{21}$ is block-diagonal. Therefore, the optimal cost can be split by columns:
\[
\normm{\Tx_{11} + \Tx_{12}\Q\Tx_{21}}_2^2
= \sum_{i=1}^N \normm{\Tx_{{11}_{:i}} + \Tx_{{12}_{:i}}\Q_{\underline ii} \Tx_{{21}_{ii}}}_2^2.
\]
Since $\Q\in\blue{\Hinf}\cap\Stau$, we can factor each block column of $\Q$ as $\Q_{\underline ii} = \Lambda_m^i \tilde\Q_{\underline ii}$, where $\tilde\Q_{\underline ii}\in\blue{\Hinf}$ has no structure or delay, and $\Lambda_m^i$ is the adobe delay matrix (defined in \cref{sec:delay}). We can therefore optimize for each block column $\tilde\Q_{\underline ii}$ separately. Thus, each subproblem is to
\begin{equation}\label{opt2x}
		\underset{\tilde\Q_{\underline ii} \in \blue{\Hinf}}{\minimize}
		\qquad \normm{\Tx_{{11}_{:i}} + \Tx_{{12}_{:i}}\Lambda_m^i \tilde\Q_{\underline ii} \Tx_{{21}_{ii}}}_2^2,
\end{equation}
Define $\Tx_i \defeq \bmat{ \Tx_{{11}_{:i}} & \Tx_{{12}_{:\underline i}} \\
\Tx_{{21}_{ii}} & 0 }$. Comparing to \eqref{opt2}--\eqref{T}, we observe that \eqref{opt2x} is a special case of the problem \eqref{opt2}, subject to the transformations $\Pp \mapsto \Pp_i$ (defined in \eqref{Pi}) and $F_d \mapsto F_{d_{\underline i\underline i}}$, $L_d \mapsto E^\tp_{n_{\underline i}} E_{n_i}L^i$, and $\Q \mapsto \Lambda_m^i \tilde \Q_{\underline i i}$. If we define the associated $\J_i$ for this subproblem (according to \eqref{nominal_Jd}), we view the subproblem as that of finding the $\Htwo$-optimal controller for the  plant $\Pp_i$ subject to an adobe input delay, as illustrated in the left panel of \cref{fig:cloop_shifting_2}. The key difference between this problem and \eqref{opt} is that we no longer have a sparsity constraint.

\begin{figure}[ht]
	\centering
	\includegraphics{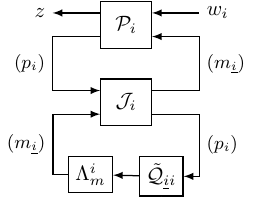}\hspace{5mm}\includegraphics{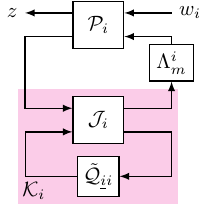}
	\caption{Equivalent subproblems via commuting $\Lambda_m^i$ and $\J_i$. Dimensions of signals are indicated along the arrows.}
	\label{fig:cloop_shifting_2}
\end{figure}

The adobe delay $\Lambda_m^i$ can be shifted to the input channel, shown in the right panel of \cref{fig:cloop_shifting_2}.
This follows from leveraging state-space properties and the block structure of certain blocks of $\J_i$. Examples include $B_{2_{\underline i\underline i}}\Lambda_m^i = \Lambda_n^i B_{2_{\underline i\underline i}}$ and $\Lambda_{n}^i E_{n_{\underline{i}}}^\tp E_{n_i} L^i = E_{n_{\underline{i}}}^\tp E_{n_i} L^i$.

The remainder of the proof proceeds as follows: we define $\K_i$ to be the shaded system in \cref{fig:cloop_shifting_2} (right panel). This is a standard adobe delayed problem, so we can apply the $\Gamma$ transformation illustrated in \cref{fig:Mirkin_Gamma}. Specifically, we define $(\tilde\Pp_i,\Pi_{u_i},\Pi_{b_i}) = \Gamma(\Pp_i,\Lambda_m^i)$, and obtain \cref{fig:cloop_shifting_3}.

\begin{figure}[ht]
	\centering
	\includegraphics{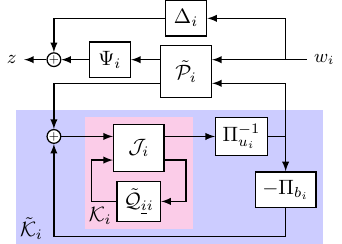}
	\caption{Transformation of the right panel of \cref{fig:cloop_shifting_2} using the loop-shifting transformation illustrated in \cref{fig:Mirkin_Gamma}.}
	\label{fig:cloop_shifting_3}
\end{figure}

By the properties of the loop-shifting transformation discussed in \cref{sec:delay}, the optimal $\tilde K_i$ is found by solving a standard non-delayed LQG problem in the (rational) plant $\tilde \Pp_i$, whose solution is
\begin{align*}
	\tilde{\K}_i &= \left[\begin{array}{c|c}%
		A_{\underline{ii}}+\tilde{B}_{2_{\underline{ii}}}\tilde{F}^i+E_{n_{\underline{i}}}^\tp E_{n_i} L^i C_{2_{i\underline{i}}}& -E_{n_{\underline{i}}}^\tp E_{n_i} L^i \\\hlinet 
		\tilde{F}^i & 0\end{array}\right]\!.
\end{align*}
Inverting each transformation, $\K_i = \Pi_{u_i}\tilde{\K}_i(I-\Pi_{b_i}\tilde{\K}_i)^{-1}$, and we can recover the Youla parameter via $\tilde \Q_{\underline i i} = \F_u(\J_{i}^{-1},\K_i)$, which leads to \eqref{Qt_longeqn}.
\begin{figure*}[ht]
	\begin{equation}\label{Qt_longeqn}
	\tilde{\Q}_{\underline{i}i}=
		\left[\begin{array}{cc|c}%
			A_{\underline{ii}} & {B}_{2_{\underline{ii}}}\Pi_{u_i}\tilde{F}^i& -E_{n_{\underline{i}}}^\tp E_{n_i} L^i\\
			-E_{n_{\underline{i}}}^\tp E_{n_i} L^i C_{2_{i\underline{i}}}& A_{\underline{ii}}+\tilde{B}_{2_{\underline{ii}}}\tilde{F}^i-E_{n_{\underline{i}}}^\tp E_{n_i} L^i\Pi_{b_i}\tilde{F}^i+E_{n_{\underline{i}}}^\tp E_{n_i} L^i C_{2_{i\underline{i}}} & -E_{n_{\underline{i}}}^\tp E_{n_i} L^i\\\hlinet 
			-F_{d_{\underline{ii}}}& \Pi_{u_i}\tilde{F}^i & 0\end{array}\right].
	\end{equation}
\end{figure*}
Now zero-pad, reintroduce delays, and concatenate, to obtain the global Youla parameter \eqref{Qopt2_aliter} via
$\Q_\textup{opt} = \sum_{i=1}^N E_{m_{\underline i}}\Lambda_m^i \tilde\Q_{\underline ii} E_{p_i}^\tp$ and recover the optimal controller \eqref{Kopt1b} via $\K_\textup{opt} = \F_l(\J,\Q_\textup{opt})$.

\subsection{Proof of \texorpdfstring{\cref{thm:6}}{Theorem 12}}\label{App:E}

From \cref{lem:stabilizing_controllers}, the set of sub-optimal controllers is parameterized as $\K = \F_l(\J,\Q)$, where $\Q\in\Stau$. Equivalently, write $\K = \F_l(\J,\Qopt+\Q_\Delta)$, where $\Q_\Delta \in \Stau$ and $\Qopt$ is given in \cref{thm:5}. The controller equation $u = \K y$ can be expanded using the LFT as $\sbmat{u \\ \eta} = \J \sbmat{y \\ v}$ with $v = \Q \eta$. 
If $\J$ has state $\xi$, the state-space equation for $\J$ decouples as
\begin{align*}
\dot\xi_i &= (A_{ii}+B_{2_{ii}} F_{d_{ii}}+L^i C_{2_{ii}})\xi_i - L^i y_i + B_{2_{ii}} v_i, \\
u_i &= F_{d_{ii}} \xi_i + v_i, \\
\eta_i &= -C_{2_{ii}} \xi_i + y_i, \quad\text{for }i=1,\dots,N.
\end{align*}
Note that we replaced $L_{d_{ii}}$ by $L^i$ from \eqref{eq:ARE_decen_nodel:2}. This leads to simpler algebra, but is in principle not required.
Meanwhile, the $\Q$ equation is coupled: $v = (\Qopt + \Q_\Delta) \eta$. Now consider Agent~$i$. Since we are interested in the agent-level implementation, we begin by extracting $u_i$, which requires finding $v_i$. Separate $\Q$ by columns as in \cref{App:D} to obtain
\begin{align}\label{veqn}
v_i &= E_{m_i}^\tp \left(\Qopt+\Q_\Delta\right) \eta \notag\\
&= \sum_{k\in[N]} E_{m_i}^\tp E_{m_{\underline k}}\Lambda_m^k \left(\tilde \Q_{\underline kk} + \hat \Q_{\underline kk}\right)\eta_k \notag \\
&= \left(\tilde \Q_{ii}+\hat \Q_{ii}\right) \eta_i + e^{-s\tau} \sum_{k\in \bar{\bar i}} \left( \tilde \Q_{ik} + \hat \Q_{ik}\right) \eta_k,
\end{align}
where $\tilde\Q_{\underline i i}$ is given in \eqref{Qt_longeqn}, and $\hat \Q\in\Ss_0$ is the delay-free component of $\Q_\Delta$.
A possible distributed implementation is to have Agent~$i$ simulate $\xi_i$ locally. Since $y_i$ is available locally, then so is $\eta_i$. We further suppose Agent~$i$ computes $v_{i,\underline i} \defeq (\tilde \Q_{\underline ii}+\hat\Q_{\underline ii}) \eta_i$ locally. Component $v_{i,i}$ is used locally, while component $v_{i,j}$ for $j\in \underline{\underline i}$ is transmitted to descendant $j$. Each agent then computes $v_i$ by summing its local $v_{i,i}$ with the delayed $e^{-s\tau} v_{i,k}$ received from strict ancestors $k\in\bar{\bar i}$.
The complete agent-level implementation is shown in \cref{fig:agent-level-subopt}.

When $\hat\Q=0$, we recover the optimal controller. In this case, the equations simplify considerably; standard state-space manipulations reduce \cref{fig:agent-level-subopt} to the simpler \cref{fig:agent-level-opt}. It is worth noting that the optimal controller does not depend on the choice of nominal gain $F_d$.

\subsection{Proof of \texorpdfstring{\cref{thm:7}}{Theorem 16}}\label{App:F}

All the estimation, control gains and Riccati solutions used here are defined in \cref{App:ARE}. The additional cost incurred due to suboptimality is $\DQ\defeq \normm{\Tx_{12}\Q_{\Delta}\Tx_{21}}_2^2$ \cite[\S14.6]{ZDG}. Using \cite[Lem.~14.3]{ZDG}, we have $\DQ\defeq \normm{\Tx_{12}\Q_{\Delta}D_{21} }_2^2$. 
\subsubsection{$\Dcen$~\eqref{Cost:1_2}}\label{App:Cost_cen} The optimal cost for a fully connected graph \cite[Thm.~14.7]{ZDG} is
\begin{align*}
	\Dcen&\defeq \normmmmm{\left[\begin{array}{c|c}
			A+B_2F_{\textup{cen}} &B_1\\\hlinet C_1+D_{12}F_{\textup{cen}}&0\end{array}\right]}_2^2+\normmmmm{\left[\begin{array}{c|c}
			A_{\textit{L}} & B_{\textit{L}} \\ \hlinet D_{12}F_{\textup{cen}}&0\end{array}\right]}_2^2,\\
		&= \trace(Y_{\textup{cen}}C_1^\tp C_1)+\trace(X_{\textup{cen}}L D_{21} D_{21}^\tp L^\tp),\\
	&= \trace(X_{\textup{cen}}B_1B_1^\tp)+\trace(Y_{\textup{cen}}F_{\textup{cen}}^\tp D_{12}^\tp D_{12}F_{\textup{cen}}),
\end{align*}
where $A_{\textit{L}}$, $B_{\textit{L}}$ are defined in \cref{App:B} for \eqref{eq:cl_map}.
\subsubsection{$\Ddecen$~\eqref{Cost:2}}\label{App:Cost_decen}
Consider that $\K_{\textup{opt}}$ in \eqref{Kopt1} is a sub-optimal centralized controller for $\normm{\Tx_{11}+\Tx_{12}\Q \Tx_{21}}_2^2$, subject to $\Q\in\blue{\RHtwo}$. Centralized $\Htwo$ theory~\cite{ZDG} implies that $\Ddecen=\Dcen+\Delta$, where $\Delta\defeq \normm{D_{12}\Q_{\textup{you}} D_{21}}_2^2$ and $\Q_{\textup{you}}$ is the centralized Youla parameter. Here, $\Q_{\textup{you}}=\F_u(\J^{-1},\K_{\textup{opt}})$, where
\begin{align*}
	\J^{-1}=\left[\begin{array}{c|cc}
		A & B_2 & -L \\[2pt] \hlinet
		C_2 & 0& I\\
		-F_{\textup{cen}}&I&0
	\end{array}\right].
\end{align*}
After simplifications, we obtain
\begin{align*}
	\Q_{\textup{you}} ={\left[\begin{array}{c|c}%
			\bar{A}+\bar{B}\bar{F}&-\bar{L}\bar{\1}_p\\\hlinet \bar{\1}_m^\tp(\bar{F}-\bar{F}_{\textup{cen}})&0\end{array}\right]}.
\end{align*}
We substitute $\Q_{\textup{you}}$ into the expression for $\Delta$, using $\normm{D_s+C_s(sI-A_s)^{-1}B_s}_2^2=\trace(C_sW_cC_s^\tp)$, where $W_c$ is the controllability Gramian given by Lyapunov equation $A_sW_c+W_cA_s^\tp+B_sB_s^\tp=0$. Based on the \cref{lem:X-X_cen} and using the identity $L_i=E_{n_i}L^iE_{p_i}^\tp$, we evaluate
\begin{align*}
\Delta	&=\sum_{i=1}^{N}\trace(D_{21}^\tp L_i^\tp E_{n_{\underline{i}}} \{X^i-X_{\textup{cen}_{\underline{ii}}}\} E_{n_{\underline{i}}}^\tp L_i D_{21})\\
	&=\trace((\blkdiag(\{X^i(1,1)\})-X_{\textup{cen}})L D_{21}D_{21}^\tp L^\tp).
\end{align*}

We obtain~\eqref{Cost:2} by substituting $\Delta$ into $\Ddecen=\Dcen+\Delta$.
\subsubsection{Alternative formulas for the cost}\label{App:Cost_alt}
We  obtained an alternative formula for $\Dcen$ in \cref{App:Cost_cen}. Similarly, in \cref{App:Cost_decen} for $\Ddecen$, $\normm{D_s+C_s(sI-A_s)^{-1}B_s}_2^2$ is also equal to $\trace(B_sB_s^\tp W_o)$, where $W_o$ is the observability Gramian given by the dual Lyapunov equation $A_s^\tp W_o+W_oA_s+C_s^\tp C_s=0$. Based on \cref{lem:W_Y}, we can evaluate $\Delta=\sum_{i=1}^{N}\trace( D_{12}(E_{m_{\underline{i}}}F_i-F_{\textup{cen}} E_{n_{\underline{i}}}) W_Y^i  (E_{m_{\underline{i}}}F_i-F_{\textup{cen}}E_{n_{\underline{i}}})^\tp D_{12}^\tp)$. Similar alternative formulas exist for~\eqref{Cost:3}, and~\eqref{Cost:4} as well.
\subsubsection{$\Ddecdel$~\eqref{Cost:3}}

We can split the cost in \eqref{opt2} into a sum of $N$ separate terms because $\Tx_{21}$ is block-diagonal. Using \cite[Prop.~6]{mirkin2012h2} on each of these $N$ problems, we write $\Ddecdel$ as a combination of a non-delayed cost $\Ddecen$ and a $\Delta$ incurred by adding delays to that system: 
$\Ddecdel=\Ddecen+\Delta$, where $\Delta \defeq \sum_{i=1}^N \trace(D_{{21}_{ii}}^\tp L_i^\tp E_{n_{\underline{i}}}^\tp(\Xi_{\tau}^i-X^i)E_{n_{\underline{i}}} L_i D_{{21}_{ii}})$. Also, $\Delta = \trace(\blkdiag(\{\Xi_{\tau}^i(1,1)-X^i(1,1)\})L D_{21}D_{21}^\tp L^\tp)$ since $L_i=E_{n_{i}}L^iE_{p_{i}}^\tp$. We obtain \eqref{Cost:3} by substituting $\Delta$ into $\Ddecdel=\Ddecen+\Delta$. See \cref{App:F:DRE_1} below for explanation on $\Xi_{\tau}^i$.

\subsubsection{$\Dcendel$~\eqref{Cost:4}}
Derivation is analogous to that of $\Ddecdel$. See \cref{App:F:DRE_2} below for explanation on $\Xi_{c_{\tau}}^i$. 

\subsubsection{Proofs for~\eqref{eq:Ineq:1}}\label{App:F:DRE_1}
We have $X^i-X_{{\textup{cen}}_{\underline{ii}}}\succeq0$ in \cref{lem:X-X_cen} for all $i\in[N]$. The properties of a positive semi-definite matrix give us $X^i(1,1)-X_{{\textup{cen}}_{\underline{ii}}}(1,1)\succeq0$, and hence $\blkdiag(\{X_{\textup{cen}}(i,i)\}) \preceq X_{\textup{dec}}$. 

Now we define $\Xi_{\tau}^i$ and establish that $\Xi_{\tau}^i-X^i \succeq 0$. The Hamiltonian for the control Riccati equation \eqref{eq:ARE_decen_del:1} is \[
\squeezemat{10pt}
H^i \defeq \bmat{ A_{\underline{ii}} - \tilde{B}_{2_{i\underline{i}}} M^{-1} D_{12_{:i}}^\tp \tilde{C}_{1_{:\underline{i}}} & -\tilde{B}_{2_{i\underline{i}}} M^{-1}\tilde{B}_{2_{i\underline{i}}}^\tp \\
	-\tilde{C}_{1_{:\underline{i}}}^\tp P_\tau \tilde{C}_{1_{:\underline{i}}} & -A_{\underline{ii}}^\tp 
	+ \tilde{C}_{1_{:\underline{i}}}^\tp D_{12_{:i}} M^{{-1}^{\tp}} \tilde{B}_{2_{i\underline{i}}}^\tp}\!,
\]
where $M\defeq D_{12_{:i}}^\tp D_{12_{:i}}$, $P_0 \defeq D_{12_{:i}} M^{-1} D_{12_{:i}}^\tp$ and $P_{\tau} \defeq I-P_0$,
and define the corresponding symplectic matrix exponential as $\Sigma^i \defeq e^{H^i\tau}$. The elements $\Sigma_{22}^i$, $\Sigma_{21}^i$ of this modified $\Sigma^i$ are used to define the $\Xi_{\tau}^i$. For all $i\in[N]$, we define $\Xi_{\tau}^i\defeq\tilde{X}^i-(\Sigma_{22}^{i^{-1}}\Sigma_{21}^i)^\tp$. By solving the associated Differential Riccati Equation (DRE) \cite[Eq.~16]{mirkin2012h2}, we show
$\Xi_{\tau}^i-X^i\succeq0$ \cite[\S4.3]{mirkin2012h2}. This gives us $X_{\textup{dec}}\preceq X_{\textup{dec,del}}$.
\subsubsection{Proofs for~\eqref{eq:Ineq:2}}\label{App:F:DRE_2}
Next we consider the case of a fully connected graph with delays. So Agent $i$'s feedback policy looks like $u_i = \K_{ii}(s)y_i + \sum_{j \in [N]\setminus i} e^{-s\tau}\K_{ij}(s) y_j$. Since we solve for $\Q$ by solving for individual columns $\Q_{\underline{i}i}$, we define the associated state transition matrix for each column as $A_{{\underline{ii}}}^c\defeq \blkdiag(\{A_{ii},A_{\underline{\underline{ii}}}\})$, where $\underline{\underline{i}}=[N]\setminus i$. We define the corresponding  $B_{1_{{\underline{i}i}}}^c$, $ B_{2_{{\underline{ii}}}}^c$ ${C}_{1_{{:\underline{i}}}}^c$, $D_{12_{{:\underline{i}}}}^c$, $C_{2_{{i\underline{i}}}}^c$, and $D_{21_{{ii}}}^c$ in a similar manner.  We also define a centralized $\Xi_{c_{\tau}}^i\defeq\tilde{X}_c^i-(\Sigma_{{22}_c}^{i^{-1}}\Sigma_{{21}_c}^i)^{\tp}$ for each $\Gamma$-modified plant
\begin{align*}
	\tilde{\Pp}_{i}^c \defeq
	\left[\begin{array}{c|cc}
		A_{{\underline{ii}}}^c & B_{1_{{\underline{i}i}}}^c & \tilde B_{2_{{\underline{ii}}}}^c \\[2pt] \hlinet
		\tilde {C}_{1_{{:\underline{i}}}}^c & 0 & D_{12_{{:\underline{i}}}}^c \\
		C_{2_{{i\underline{i}}}}^c & D_{21_{{ii}}}^c & 0\end{array}\right].
\end{align*}
Each individual column $\Q_{\underline{i}i}$ has its own $\tilde\Pp^c_i$ as the associated adobe delay matrix is different. We have a corresponding control ARE $(\tilde X_c^i,\tilde F_c^i) \defeq
	\ric(A_{\underline{ii}}^c,\tilde B_{2_{\underline{ii}}}^c,\tilde C_{1_{:\underline{i}}}^c,D_{12_{:\underline{i}}}^c).$ 
We solve DREs for each $\Xi_{c_{\tau}}^i$ as in \cite[\S V.C]{mirkin2012h2} to obtain $\Xi_{c_{\tau}}^i-X_{{\text{cen}}_{\underline{ii}}}^c\succeq0$ for all $i\in[N]$, where $X_{{\text{cen}}_{\underline{ii}}}^c$ is a reshuffling of $X_{\text{cen}}$ to mirror the ordering of $\underline{i}=\{i,[N]\setminus i\}$. This proves that $\blkdiag(\{X_{\textup{cen}}(i,i)\}) \preceq X_{\textup{cen,del}}$ for all $i\in[N]$.

\cref{lem:Xi_decdel-Xi_cendel} proves that $X_{\textup{cen,del}} \preceq X_{\textup{dec,del}}$ for all $i\in[N]$.
\begin{lem}\label{lem:Xi_decdel-Xi_cendel}
	$\Xi_{c_{\tau}}^i$ and $\Xi_{{\tau}}^i$ are the solutions of the DREs for delayed fully connected and decentralized graphs respectively. Then, $W_{\Xi}^i\defeq \Xi_{\tau}^i-\Xi_{c_{\underline{ii}}}^i \succeq 0$, where $\Xi_{c_{\underline{ii}}}^i\defeq E_{n_{\underline{i}}}^\tp\Xi_{c_{\tau}}^iE_{n_{\underline{i}}}$, and $\underline{i}$ corresponds to $\Xi_{\tau}^i$.
\end{lem}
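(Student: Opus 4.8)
The plan is to establish \cref{lem:Xi_decdel-Xi_cendel} as the delayed counterpart of \cref{lem:X-X_cen}: just as \cref{lem:X-X_cen} shows that the decentralized control Riccati solution dominates the descendant block of the centralized one in the non-delayed case, here we show that this domination persists once the delay is folded in through the matrices $\Xi_\tau^i$ and $\Xi_{c_\tau}^i$. The mechanism is a Riccati comparison: both quantities are produced by integrating the differential Riccati equation (DRE) of \cite[Eq.~16]{mirkin2012h2} over the interval $[0,\tau]$, the former for the decentralized data $(A_{\underline{ii}},B_{2_{\underline{ii}}},C_{1_{:\underline{i}}},D_{12_{:\underline{i}}})$ and the latter for the reshuffled centralized data. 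This is the same DRE already used in \cite[\S4.3]{mirkin2012h2} and \cite[\S V.C]{mirkin2012h2} to prove the ``edges'' $\Xi_\tau^i\succeq X^i$ and $\Xi_{c_\tau}^i\succeq X_{\textup{cen}_{\underline{ii}}}^c$, so the remaining edge $\Xi_\tau^i\succeq\Xi_{c_{\underline{ii}}}^i$ should follow by comparing the two DRE flows against each other rather than each against its own non-delayed limit. Note that transitivity through the three known edges does not suffice, since both $\Xi_\tau^i$ and $\Xi_{c_{\underline{ii}}}^i$ merely dominate the common lower bound $X_{\textup{cen}_{\underline{ii}}}$, which gives no ordering between them.

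Concretely, let $P^i(t)$ and $P_c^i(t)$ denote the decentralized and centralized DRE trajectories on $[0,\tau]$, so that $P^i(\tau)=\Xi_\tau^i$ and $P_c^i(\tau)=\Xi_{c_\tau}^i$. First I would compress the centralized DRE onto the descendant subspace by left/right multiplication with $E_{n_{\underline{i}}}^\tp$ and $E_{n_{\underline{i}}}$; because $A$ is block-diagonal and $\underline{i}$ is a sub-list of the full index set, this compression acts cleanly on the dynamics, exactly as in the opening step of the proof of \cref{lem:X-X_cen}, and produces a DRE for $E_{n_{\underline{i}}}^\tp P_c^i(t)E_{n_{\underline{i}}}$ on the same state space as $P^i(t)$. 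I would then subtract the two DREs and track $W(t)\defeq P^i(t)-E_{n_{\underline{i}}}^\tp P_c^i(t)E_{n_{\underline{i}}}$. Completing the square about the decentralized closed-loop dynamics linearizes the Riccati difference into a Lyapunov differential equation $\dot W=\mathcal{A}_{\textup{cl}}(t)^\tp W+W\mathcal{A}_{\textup{cl}}(t)+\mathcal{S}(t)$, whose source $\mathcal{S}(t)\succeq0$ is the quadratic penalty for using the constrained decentralized gain in the unconstrained problem — the dynamic analogue of the positive semidefinite term in \eqref{eq:W_X}. With $W\succeq0$ at the non-delayed boundary supplied by \cref{lem:X-X_cen}, monotonicity of the Lyapunov flow gives $W(t)\succeq0$ throughout $[0,\tau]$; evaluating at $t=\tau$ yields $W_\Xi^i=\Xi_\tau^i-\Xi_{c_{\underline{ii}}}^i\succeq0$ and closes the chain \eqref{eq:Ineq:1}--\eqref{eq:Ineq:2} of \cref{thm:7}.

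\textbf{The main obstacle} is that the Hamiltonian $H^i$ governing the DRE is built from the $\Gamma$-modified matrices $\tilde B_2$ and $\tilde C_1$, and these depend on the adobe delay matrix through the entire descendant set, so $\tilde C_{1_{:\underline{i}}}$ for the decentralized problem is not a clean sub-block of its centralized analogue and the compression by $E_{n_{\underline{i}}}$ no longer commutes with the flow termwise. I would address this by keeping the DRE in the original (pre-$\Gamma$) plant data and delay $\tau$, for which the block-diagonality of $A$ and the sparsity of $B_2,C_2,D_{21}$ make the compression exact and the cross terms cancel into the single positive semidefinite source $\mathcal{S}(t)$, mirroring the cancellation that produced \eqref{eq:W_X}. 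Verifying that this cancellation survives along the whole flow — equivalently, that the comparison is monotone at every $t\in[0,\tau]$ and not merely at the endpoints — together with the careful bookkeeping of which ARE solution ($X^i$ versus $\tilde X^i$ from \eqref{eq:ARE_decen_del:1}) sits at each boundary, is where the technical effort concentrates.
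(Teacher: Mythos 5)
Your proposal takes essentially the same route as the paper: the paper likewise subtracts the two DREs to obtain a differential Lyapunov equation for $W_{\Xi}^i$ whose source term is the positive semidefinite quadratic $(E_{m_{\underline{i}}} F_{\Xi}^i-F_{\Xi_c }^i E_{n_{\underline{i}}})^\tp D_{12}^\tp D_{12} (E_{m_{\underline{i}}} F_{\Xi}^i-F_{\Xi_c }^i E_{n_{\underline{i}}})$ written in the original (pre-$\Gamma$) plant data, integrates it "analogously to \cref{lem:X-X_cen}," and closes by adding the boundary fact $X^i-X_{\textup{cen}_{\underline{ii}}}\succeq 0$ from \cref{lem:X-X_cen}. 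Your remarks that transitivity through the common lower bound is insufficient and that the comparison should be run on the un-transformed DRE are both consistent with the paper's argument.
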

\begin{proof}
	The DREs for $\Xi_{\tau}^i$, and $\Xi_{c_{\tau}}^i$ are subtracted to obtain the differential Lyapunov equation
	\if\MODE2
	\begin{equation*}
		\dot\Xi_{c_{\underline{ii}}}^i-\dot\Xi_{\tau}^i=(A_{\underline{ii}}+ B_{2_{\underline{i}i}} F_{\Xi}^i)^\tp W_{\Xi}^i + W_{\Xi}^i (A_{\underline{ii}}+ B_{2_{\underline{i}i}} F_{\Xi}^i)
		+(E_{m_{\underline{i}}} F_{\Xi}^i-F_{\Xi_c }^i E_{n_{\underline{i}}})^\tp D_{12}^\tp D_{12} (E_{m_{\underline{i}}} F_{\Xi}^i-F_{\Xi_c }^i E_{n_{\underline{i}}}),
	\end{equation*}
	\else
	\begin{multline*}
		\dot\Xi_{c_{\underline{ii}}}^i-\dot\Xi_{\tau}^i=(A_{\underline{ii}}+ B_{2_{\underline{i}i}} F_{\Xi}^i)^\tp W_{\Xi}^i + W_{\Xi}^i (A_{\underline{ii}}+ B_{2_{\underline{i}i}} F_{\Xi}^i)\\
		+(E_{m_{\underline{i}}} F_{\Xi}^i-F_{\Xi_c }^i E_{n_{\underline{i}}})^\tp D_{12}^\tp D_{12} (E_{m_{\underline{i}}} F_{\Xi}^i-F_{\Xi_c }^i E_{n_{\underline{i}}}),
	\end{multline*}
	\fi
	where $F_{\Xi}^i\defeq-(D_{12_{{:i}}}^\tp D_{12_{{:i}}})^{-1}	(\Xi_{\tau}^i B_{2_{{\underline{i}i}}} +C_{1_{:\underline{i}}}^\tp D_{12_{{:i}}})^\tp$, and $F_{\Xi_c}^i\defeq-(D_{12_{{:i}}}^{c^\tp} D_{12_{{:i}}}^c)^{-1}	(\Xi_{c_{\tau}}^i B_{2_{{\underline{i}i}}}^c +C_{1_{:\underline{i}}}^{c^\tp} D_{12_{{:i}}}^c)^\tp$. The rest is analogous to the proof of \cref{lem:X-X_cen}. Finally, we obtain $\Xi_{\tau}^i-\Xi_{c_{\underline{ii}}}^i-X^i+X_{{\textup{cen}}_{\underline{ii}}} \succeq 0$. Using $X^i-X_{{\textup{cen}}_{\underline{ii}}}\succeq 0$ from \cref{lem:X-X_cen}, we obtain $\Xi_{\tau}^i-\Xi_{c_{\underline{ii}}}^i \succeq 0$. 
\end{proof}


	\bibliographystyle{abbrv}
	\bibliography{dyndj}

\end{document}